\documentclass[10pt,a4paper]{article}
\usepackage[final]{optional}
\usepackage[british,american]{babel}

\usepackage{latexsym}
\usepackage{amsxtra}
\usepackage{mathrsfs}

\usepackage{amsfonts, amsmath, wasysym}


\usepackage{amssymb,amsthm,
paralist
}

\usepackage{
latexsym,
nicefrac,
}


\usepackage[usenames]{color}

\usepackage{url}

\definecolor{darkgreen}{rgb}{0,0.5,0}
\definecolor{darkred}{rgb}{0.7,0,0}
\usepackage[colorlinks,
citecolor=darkgreen, linkcolor=darkred
]{hyperref}
\usepackage{esint}
\usepackage{bibgerm}
\usepackage[normalem]{ulem}

\bibliographystyle{geralpha}

\textwidth=135mm   
\textheight=228mm
\topmargin=-0.4in
\oddsidemargin=+0.4in
\evensidemargin=+0.4in


\parindent=0pt
\parskip=10pt

\theoremstyle{plain}
\newtheorem{lemma}{Lemma}[section]
\newtheorem{thm}[lemma]{Theorem}
\newtheorem{prop}[lemma]{Proposition}

\theoremstyle{definition}
\newtheorem{defn}[lemma]{Definition}

\newtheorem{rmk}[lemma]{Remark}

\setlength{\unitlength}{1mm}      

\def\blbox{\quad \vrule height7.5pt width4.17pt depth0pt}
\newcommand{\cmt}[1]{\opt{draft}{\textcolor[rgb]{0.5,0,0}{
$\LHD$ #1 $\RHD$\marginpar{\blbox}}}}


\numberwithin{equation}{section}

\newcommand{\m}{\ensuremath{{\cal M}}}
\newcommand{\n}{\ensuremath{{\cal N}}}

\newcommand{\cc}{\ensuremath{{\cal C}}}



\newcommand{\pl}[2]{{\frac{\partial #1}{\partial #2}}}


\newcommand{\ga}{\gamma}

\newcommand{\de}{\delta}

\newcommand{\si}{\sigma}

\newcommand{\Si}{\Sigma}
\renewcommand{\th}{\theta}
\newcommand{\Th}{\Theta}

\newcommand{\ep}{\varepsilon}

\newcommand{\R}{\ensuremath{{\mathbb R}}}
\newcommand{\N}{\ensuremath{{\mathbb N}}}

\newcommand{\Z}{\ensuremath{{\mathbb Z}}}
\newcommand{\C}{\ensuremath{{\mathbb C}}}





\newcommand{\downto}{\downarrow}



\DeclareMathOperator{\inj}{inj}

\newcommand{\norm}[1]{\Vert#1\Vert}


\def\blbox{\quad \vrule height7.5pt width4.17pt depth0pt}


\def\bee{\begin{eqnarray}}
\def\beee{\begin{eqnarray*}}
\def\eee{\end{eqnarray}}
\def\eeee{\end{eqnarray*}}
\def\nn{\nonumber}

\newcommand{\beq}{\begin{equation}}
\newcommand{\eeq}{\end{equation}}
\newcommand{\beqa}{\begin{equation}\begin{aligned}}
\newcommand{\eeqa}{\end{aligned}\end{equation}}
\newcommand{\beqas}{\begin{equation*}\begin{aligned}}
\newcommand{\eeqas}{\end{aligned}\end{equation*}}
\newcommand{\brmk}{\begin{rmk}}
\newcommand{\ermk}{\end{rmk}}
\newcommand{\partref}[1]{\hbox{(\csname @roman\endcsname{\ref{#1}})}}


\newcommand*\tr{\mathop{\mathrm{tr}}\nolimits}

\newcommand*\arsinh{\mathop{\mathrm{arsinh}}\nolimits}




\newcommand{\M}{\ensuremath{{\mathcal M}}_{-1}}

\newcommand{\abs}[1]{\left\vert#1\right\vert}

\newcommand{\eps}{\varepsilon}
\newcommand{\na}{\nabla}
\newcommand{\Hol}{{\cal H}} 

\newcommand{\Zn}{\Z\setminus\{0\}}  
\newcommand{\drangle}{\rangle\rangle}
\newcommand{\dlangle}{\langle\langle}
\newcommand{\Col}{\ensuremath{{\cal C}}}
\newcommand{\thin}{\text{-thin}}
\newcommand{\thick}{\text{-thick}}

\title{{\sc
asymptotics of the \\ Teichm\"uller harmonic map flow
}
\\
\cmt{DRAFT with comments}
}
\author{Melanie Rupflin, Peter M. Topping and Miaomiao Zhu}
\date{\today}


\begin{document}
\maketitle

\begin{abstract}
The Teichm\"uller harmonic map flow, introduced in \cite{R-T}, evolves both a map from a closed Riemann surface to an arbitrary compact Riemannian manifold, and a constant curvature metric on the domain, in order to reduce its harmonic map energy as quickly as possible.
In this paper, we develop the geometric analysis of holomorphic quadratic differentials in order to explain what happens in the case that the domain metric of the flow degenerates at infinite time.
We obtain a branched minimal immersion from the degenerate domain.
\end{abstract}

\section{Introduction}
Let $M:=M_\gamma$ be a smooth closed orientable surface of genus $\gamma\geq 2$, let $N=(N,G)$ be a smooth compact Riemannian manifold of any dimension, and let $\eta>0$ be some
fixed parameter.
We consider the flow
\begin{equation}
\label{flow}
\pl{u}{t}=\tau_g(u);\qquad \pl{g}{t}=\frac{\eta^2}{4} Re(P_g(\Phi(u,g))),
\end{equation}
introduced in \cite{R-T} as the natural gradient flow of the harmonic map energy when both a map $u:M\to (N,G)$ and a hyperbolic metric $g$ on its domain $M$ are allowed to evolve.
Here, $\tau_g(u)$ represents the tension field of $u$ (i.e.
$\tr \nabla du$), $P_g$ represents the $L^2$-orthogonal projection from the space of quadratic differentials on $(M,g)$ onto the space of \emph{holomorphic} quadratic differentials, and
$\Phi(u,g)$ represents the Hopf differential -- see \cite{R-T} for further information.
The flow decreases the energy according to
\beq
\label{energy-identity}
\frac{dE}{dt}=-\int_M |\tau_g(u)|^2+\left(\frac{\eta}{4}\right)^2 |Re(P_g(\Phi(u,g)))|^2.
\eeq
While a solution to \eqref{flow} can be projected down to give a path in 
Teichm\"uller space, it is worth digesting that the flow is not to be considered to be the flow of a map coupled with a flow in 
(finite dimensional) Teichm\"uller space. See \cite{R-T} for more details of the geometry behind the equations.

Given any initial data $(u_0,g_0)\in H^1(M,N)\times \M$, 
with $\M$ the set of smooth hyperbolic metrics on $M$, we know \cite{rupflin_existence} that a (weak) solution of \eqref{flow} exists on a maximal interval $[0,T)$, 
smooth except possibly at finitely many times,
and that $T<\infty$ only if the flow of metrics degenerates in moduli space
as $t\nearrow T$, that is if the length $\ell(g(t))$ of the shortest closed geodesic $\ell(g(t))\to 0$ as $t\nearrow T$.

In \cite{R-T} we proved that if such a degeneration does not occur, not even as $t\to \infty$, then the maps $u(t)$ subconverge (after reparametrisation) to a branched minimal immersion (or a constant map)
with the same action on $\pi_1$ as the initial map $u_0$.

Here we prove that also in the case that the metric degenerates as $t\to \infty$ (but not before) we also obtain asymptotic convergence to a minimal object in the following sense

\begin{thm}\label{thm:asymptotics}
Suppose that $(u,g)$ is a global (weak) solution of \eqref{flow}
as described above for which $\ell(g(t))\to 0$ as $t\to \infty$. Then there exist a sequence of times $t_i\to \infty$, a number $1\leq k\leq 3(\gamma-1)$ and a
hyperbolic punctured surface $(\Si,h,c)$ with $2k$ punctures (and possibly disconnected) such that the following holds.
\begin{enumerate}
\item The surfaces $(M,g(t_i),c(t_i))$ converge to the surface $(\Si,h,c)$ by collapsing $k$ simple closed geodesics $\sigma^{j}_{i}$
in the sense of Proposition \ref{pro1.1}; in particular there is a sequence of diffeomorphisms $ f_i:\Si\to M\setminus \cup_{j=1}^k \sigma^{j}_{i}$
such that $$f_i^*g(t_i)\to h \text{ and } f_i^*c(t_i)\to c \text{ smoothly locally, }$$
where $c(t)$ denotes the complex structure of $(M,g(t))$.
 \item The maps $f_i^*u(t_i):=u(t_i)\circ f_i$ converge to a limit $u_\infty$ weakly in $H^1_{loc}(\Si)$ and weakly in $H_{loc}^2(\Si\setminus S)$ as well as strongly in $W_{loc}^{1,p}(\Si\setminus S)$,
$p\in [1,\infty)$,
away from a finite set of points $S\subset \Si$ at which
energy concentrates.
\item 
The limit $u_\infty:\Si\to N$ extends to a branched minimal immersion (or constant map) on each component of the compactification of $(\Sigma,c)$ obtained by filling in each of the $2k$ punctures.
\end{enumerate}
\end{thm}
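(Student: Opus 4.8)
The plan is to start from the energy inequality \eqref{energy-identity}. Since $E(u(t),g(t))$ is nonincreasing and bounded below by $0$, the time integral $\int_0^\infty\!\big(\|\tau_{g}(u)\|_{L^2}^2+(\eta/4)^2\|Re(P_g(\Phi(u,g)))\|_{L^2}^2\big)\,dt$ is bounded by the initial energy $E_0:=E(u_0,g_0)$, hence finite. I may therefore choose a sequence $t_i\to\infty$ along which \emph{both} $\|\tau_{g(t_i)}(u(t_i))\|_{L^2}\to 0$ and $\|Re(P_{g(t_i)}(\Phi(u(t_i),g(t_i))))\|_{L^2}\to 0$; along this sequence the maps are asymptotically harmonic and the metrics asymptotically stationary for the flow. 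This sequence is the backbone of the whole argument, and part 3 will ultimately rest on feeding these two smallness statements into the geometric analysis of holomorphic quadratic differentials.

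For part 1 I would appeal directly to the Deligne--Mumford type compactness encoded in Proposition \ref{pro1.1}: since $\ell(g(t))\to 0$, after passing to a subsequence a fixed collection of $k\leq 3(\gamma-1)$ disjoint simple closed geodesics $\sigma^j_i$ is pinched, producing the limit $(\Sigma,h,c)$ with $2k$ punctures together with the diffeomorphisms $f_i$ and the local smooth convergence $f_i^*g(t_i)\to h$, $f_i^*c(t_i)\to c$. For part 2, conformal invariance of the Dirichlet energy in two dimensions gives $E(f_i^*u(t_i),f_i^*g(t_i))=E(u(t_i),g(t_i))\leq E_0$, so the pulled-back maps $f_i^*u(t_i)$ are bounded in $H^1_{loc}(\Sigma)$ and subconverge weakly to some $u_\infty$. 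I would then run the standard concentration--compactness/bubbling argument for \emph{almost} harmonic maps: define $S\subset\Sigma$ to be the finite set (of cardinality at most $E_0/\eps_0$) where at least $\eps_0$ of the energy concentrates, and away from $S$ use $\eps$-regularity for maps with small tension field to upgrade to weak $H^2_{loc}(\Sigma\setminus S)$ and strong $W^{1,p}_{loc}(\Sigma\setminus S)$ convergence.

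For part 3, strong $W^{1,p}_{loc}$ convergence together with $\tau_{g(t_i)}(u(t_i))\to 0$ lets me pass to the limit in the harmonic map equation, so $u_\infty$ is harmonic on $\Sigma\setminus S$; being of finite energy it extends harmonically across $S$ and across each puncture by the Sacks--Uhlenbeck removable singularity theorem. The decisive point is to show that $u_\infty$ is weakly conformal, i.e.\ that its Hopf differential $\Phi(u_\infty,h)$ vanishes, for then a conformal harmonic map is automatically a branched minimal immersion (or a constant), and filling in the punctures yields the immersion on each compactified component. Since $u_\infty$ is harmonic, $\Phi(u_\infty,h)$ is holomorphic, and a holomorphic quadratic differential whose real part vanishes is identically zero; so it suffices to prove $Re(\Phi(u_\infty,h))=0$. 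Heuristically this is forced because $\bar\partial\Phi(u,g)$ is controlled by $\tau_g(u)$, so the full Hopf differential is asymptotically equal to its holomorphic projection $P_{g(t_i)}\Phi(u(t_i),g(t_i))$, whose real part is exactly the quantity we have arranged to be small; passing to the $W^{1,p}_{loc}$ limit of $\Phi$ then gives $Re(\Phi(u_\infty,h))=0$.

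The hard part will be making this last step rigorous \emph{through the degeneration}, which is precisely where the analysis of holomorphic quadratic differentials on thin hyperbolic collars is needed. The $L^2$-orthogonal projection $P_g$ is taken over the entire degenerating surface, and elements of its image can carry most of their $L^2$-mass in the collars around the pinching geodesics $\sigma^j_i$, a region which disappears in the Deligne--Mumford limit; one must therefore rule out that the smallness of $\|Re(P_{g(t_i)}\Phi)\|_{L^2}$ is an artefact of concentration or cancellation in the necks, and show that it genuinely transfers to the vanishing of $\Phi(u_\infty,h)$ on each component of $\Sigma$. I expect this to require an explicit Fourier/collar decomposition of holomorphic quadratic differentials on the hyperbolic collar, uniform estimates comparing $P_{g(t_i)}$ with its counterpart on the limit surface, and a separate argument that no energy of the maps is lost into the degenerating necks, so that the convergence of $\Phi$ is strong enough near the cusps to justify the removable-singularity extension across the filled-in punctures.
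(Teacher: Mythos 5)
Your overall skeleton matches the paper for most of the theorem: the choice of $t_i$ from the energy identity so that both $\|\tau_{g(t_i)}(u(t_i))\|_{L^2}\to 0$ and the projected Hopf differential tend to zero, Deligne--Mumford compactness for part 1, conformal invariance of energy plus the concentration set $S$ and $\eps$-regularity for part 2, and harmonicity of $u_\infty$ with the Sacks--Uhlenbeck removable singularity theorem for the extension across $S$ and the punctures. The genuine gap is in the conformality step, which is the heart of the theorem and of the paper. Your proposed mechanism --- $\bar\partial\Phi(u,g)$ is controlled by $\tau_g(u)$, hence $\Phi(u(t_i),g(t_i))$ is ``asymptotically equal to its holomorphic projection'' $P_{g(t_i)}(\Phi)$, whose real part is small --- requires a Poincar\'e-type inequality $\|\Psi-P_g(\Psi)\|\leq C\|\bar\partial\Psi\|$ with a constant $C$ \emph{uniform along the degenerating sequence of hyperbolic metrics}. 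No such estimate is proved in this paper: it is exactly the content of the forthcoming work \cite{RT2}, and it is delicate for precisely the collar-concentration reason you identify. You concede that making this rigorous is ``the hard part'', but the ingredients you then list (ruling out cancellation in the necks, no energy loss into the necks) are not what closes the argument, so the key assertion $\Phi(u_\infty,h)\equiv 0$ remains unproven in your proposal.

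The paper closes this gap by a mechanism that never compares $\Phi_i$ with $P_{g_i}(\Phi_i)$ and never needs uniform control of the full projection through the degeneration. It introduces the subspace $W_i\subset\Hol(M,g(t_i))$ of holomorphic quadratic differentials with vanishing principal part on every degenerating collar (Lemma \ref{Wi_structure}); such differentials decay exponentially along the collars (Lemma \ref{lemma:collar}), and consequently the projections $P_{g(t_i)}^{W_i}$ converge to $P_h^{\Hol(\Si,h)}$ when applied to $L^1$-bounded, locally convergent sequences of quadratic differentials (Theorem \ref{neck_lemma}). Two observations then finish the proof. First, since $W_i$ is a subspace of $\Hol(M,g(t_i))$, orthogonal projections nest, so
$$\|P_{g(t_i)}^{W_i}(\Phi(u(t_i),g(t_i)))\|_{L^2}\leq \|P_{g(t_i)}(\Phi(u(t_i),g(t_i)))\|_{L^2}\to 0,$$
and the possibly wild behaviour of the full projection on the collars is irrelevant. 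Second, since $u_\infty$ is harmonic with finite energy, its Hopf differential already lies in $\Hol(\Si,h)$ and is therefore \emph{fixed} by $P_h^{\Hol(\Si,h)}$, whence
$$\Phi(u_\infty,h)=P_h^{\Hol(\Si,h)}(\Phi(u_\infty,h))=\lim_{i\to\infty}f_i^*\bigl[P_{g(t_i)}^{W_i}(\Phi(u(t_i),g(t_i)))\bigr]=0,$$
using the local $L^1$ convergence of the Hopf differentials to apply Theorem \ref{neck_lemma}. To repair your route you would either have to import the uniform Poincar\'e estimate of \cite{RT2}, or reorganise the conformality argument along these lines.
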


The issue of finite time degeneration of the metric component of solutions to \eqref{flow} as well as the existence of \textit{global} (generalised) solutions
of \eqref{flow} for arbitrary initial data will be discussed in future work.

Key for the proof of this result, which is given in Section \ref{mainthmpf}, is a good understanding of the structure of the space of holomorphic quadratic differentials on a sequence of degenerating hyperbolic surfaces.
This subject has been investigated from many points of view; here we develop the geometric analysis side of the theory,
identifying precisely a subspace $W_i$ of the space of holomorphic quadratic differentials that persists in this limit, and
obtaining quantitative estimates on those differentials which centre around Lemma \ref{Wi_structure}. Conversely, the holomorphic quadratic differentials orthogonal to $W_i$
concentrate on degenerating collars as $i\to\infty$. This fact will be made more precise in the upcoming paper
\cite{RT2} where we establish a uniform Poincar\'e-type estimate for general quadratic differentials on hyperbolic surfaces of bounded genus.

A starting point to understand the basic theory of degenerating hyperbolic surfaces, and to digest our notation, is the appendix.

\emph{Acknowledgements:} This work was partially supported by The Leverhulme Trust.

\section{The space of holomorphic quadratic differentials on degenerating surfaces}
In this section we would like to consider the space of holomorphic quadratic differentials on a closed surface $M$ of genus $\gamma\geq 2$, with respect to a degenerating sequence of complex structures
and corresponding hyperbolic metrics $g_i$.
For each complex structure, we will be viewing the space $\Hol(M,g_i)$ of holomorphic quadratic differentials as a complex vector space
of dimension $3\ga-3$, equipped with various $L^p$ norms and the $L^2$ inner product
arising from the standard Hermitian inner product on each fibre computed with respect to 
$g_i$. 

According to the Deligne-Mumford compactness theorem in the form outlined in Proposition \ref{pro1.1},
after passing to a subsequence and pulling back by diffeomorphisms
the surfaces $(M,g_i$) converge to a limit $(\Si,h)$ that is a  hyperbolic punctured surface by pinching
$1\leq k\leq 3\gamma-3$ collars.

On the limit, we will need to consider the space $\Hol(\Si,h)$ of all holomorphic quadratic differentials that lie in $L^1(\Si,h)$.  If we extend such a holomorphic quadratic differential across all the punctures on the limit then the singularity can at worst be a simple pole
(by virtue of it lying in $L^1$), see Lemma \ref{lemma:poles-norms}. Thus by Riemann-Roch, the (complex) dimension of $\Hol(\Si,h)$ is $3(\gamma-1)-k=\dim_\C(\Hol(M,g_i))-k$.

One central task in this paper is to isolate a sequence of subspaces $W_i\subset \Hol(M,g_i)$ of complex dimension $3(\ga-1)-k$ that converge in some sense to $\Hol(\Si,h)$ without loss of any $L^p$ norm. These subspaces can be loosely characterised as consisting of the holomorphic quadratic differentials that decay rapidly along each degenerating collar -- see in particular Lemma \ref{Wi_structure}.
Orthogonal to that subspace is a complementary subspace of holomorphic quadratic differentials that concentrate entirely on degenerating collars (in terms of $L^2$ norm) and thus have vanishing $L^1$ norm in the limit. The analysis of these latter subspaces will be refined in forthcoming work \cite{RT2}.

First of all we prove that $L^2$-bounded holomorphic quadratic differentials have a form of $L^1$ compactness.

\begin{lemma}
\label{L1atleastepsilon}
Suppose $M$ is a closed surface of genus $\gamma\geq 2$.
Suppose $g_i$ is a sequence of degenerating hyperbolic metrics on $M$ as described in Proposition \ref{pro1.1} and $\Th_i$ is a sequence of holomorphic quadratic differentials (with respect to $g_i$) satisfying $\|\Th_i\|_{L^2(M,g_i)}=1$.
Then after passing to a subsequence, we have
$$f_i^*\Th_i\to\Th_\infty,$$
smoothly locally on $\Si$, where $\Th_\infty$ is a
holomorphic quadratic differential on $(\Si,h)$, lying in
$\Hol(\Si,h)$. Moreover, we have
\begin{equation}
\label{L1conv}
\|\Th_i\|_{L^1(M,g_i)}\to\|\Th_\infty\|_{L^1(\Si,h)}
\end{equation}
as $i\to\infty$, and in particular, if in addition we know that
$\|\Th_i\|_{L^1}\geq \epsilon>0$,
then $\Th_\infty$ is not identically zero.

More generally, if $L_i$ is a sequence of $n$-dimensional subspaces of the complex vector spaces 
$\Hol(M,g_i)$ of holomorphic quadratic differentials (for some $n$) with the property that
\begin{equation}
\label{noL1decay}
\inf_{v\in L_i}\frac{\|v\|_{L^1(M,g_i)}}{\|v\|_{L^2(M,g_i)}}\geq \ep>0,
\end{equation}
then there exists a $n$-dimensional subspace $L_\infty$ of $\Hol(\Si,h)$ such that after passing to a subsequence, the subspaces $L_i$ converge to $L_\infty$ in the sense that there exists a basis
$\Th_\infty^j$ of $L_\infty$, and for each $i$ there exists an $L^2$-unitary basis
$\Th_i^j$ of $L_i$ such that for each $j\in \{1,\ldots,n\}$, we have
$f_i^*\Th_i^j \to \Th_\infty^j$ smoothly locally on $\Sigma$ as
$i\to\infty$.
\end{lemma}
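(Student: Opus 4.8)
The plan is to bootstrap the single-differential statement, which is the first part of the lemma and which I regard as already established, up to the level of subspaces, with the non-degeneracy hypothesis \eqref{noL1decay} entering at exactly the point where linear independence of the limits must be guaranteed. First I would, for each $i$, fix an $L^2$-orthonormal (``unitary'') basis $\Th_i^1,\dots,\Th_i^n$ of $L_i$; since the basis is orthonormal, each $\Th_i^j$ satisfies $\|\Th_i^j\|_{L^2(M,g_i)}=1$, so the first part of the lemma applies to each of the $n$ sequences separately. Extracting a subsequence for $j=1$, then a further subsequence for $j=2$, and so on (a finite diagonal argument, legitimate because $n$ is fixed and independent of $i$), I obtain a single subsequence along which $f_i^*\Th_i^j\to\Th_\infty^j$ smoothly locally on $\Si$ for every $j$, with each $\Th_\infty^j\in\Hol(\Si,h)$. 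I then define $L_\infty:=\operatorname{span}_{\C}\{\Th_\infty^1,\dots,\Th_\infty^n\}$, which lies in $\Hol(\Si,h)$ because the latter is a vector space.

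The crux is to show $\dim_{\C}L_\infty=n$, i.e. that $\Th_\infty^1,\dots,\Th_\infty^n$ are linearly independent; once this is known, these differentials are automatically a basis of $L_\infty$ and the proof is complete. I would argue by contradiction: suppose there are constants $a^1,\dots,a^n\in\C$, not all zero and normalised so that $\sum_j|a^j|^2=1$, with $\sum_j a^j\Th_\infty^j\equiv 0$. Set $v_i:=\sum_j a^j\Th_i^j\in L_i$. Because the $\Th_i^j$ are $L^2$-orthonormal, $\|v_i\|_{L^2(M,g_i)}=(\sum_j|a^j|^2)^{1/2}=1$, while by linearity of the pullback and the smooth local convergence of the basis, $f_i^*v_i=\sum_j a^j f_i^*\Th_i^j\to\sum_j a^j\Th_\infty^j=0$ smoothly locally on $\Si$. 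Now I apply the first part of the lemma to the sequence $v_i$, which has unit $L^2$ norm: along a subsequence it yields $\|v_i\|_{L^1(M,g_i)}\to\|v_\infty\|_{L^1(\Si,h)}$, where $v_\infty$ is the smooth local limit of $f_i^*v_i$; by uniqueness of that limit $v_\infty=0$, so $\|v_i\|_{L^1(M,g_i)}\to 0$ along this subsequence. This contradicts \eqref{noL1decay}, which forces $\|v_i\|_{L^1(M,g_i)}\geq\ep\,\|v_i\|_{L^2(M,g_i)}=\ep>0$ for every $i$. Hence the limits are independent, $\dim_{\C}L_\infty=n$, and the $\Th_\infty^j$ furnish the required basis.

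The genuinely essential ingredient, and the single place where anything beyond formal linear algebra is used, is the $L^1$-norm convergence \eqref{L1conv} supplied by the first part of the lemma. Without it, a combination $v_i$ could have its $L^1$ mass escape down the degenerating collars, where the pullbacks carry no information, and thus vanish in the limit even though $f_i^*v_i\to 0$ locally, which would collapse the dimension. The hypothesis \eqref{noL1decay} is precisely what rules this out, certifying that no $L^2$-unit vector of $L_i$ loses all of its $L^1$ mass in the limit; it is the conjunction of \eqref{L1conv} and \eqref{noL1decay} that preserves the dimension under the limit. I expect no difficulty in the remaining bookkeeping, since the particular choice of orthonormal basis is immaterial to the contradiction argument and the diagonal extraction is harmless for fixed finite $n$.
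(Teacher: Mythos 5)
Your proposal is correct and follows essentially the same route as the paper: fix $L^2$-unitary bases of the $L_i$, apply the first part of the lemma to each basis sequence (with a finite diagonal extraction), and then rule out linear dependence of the limits by observing that a unitary combination $v_i=\sum_j a^j\Th_i^j$ converging smoothly locally to zero would, by the $L^1$-norm convergence \eqref{L1conv}, have $\|v_i\|_{L^1(M,g_i)}\to 0$, contradicting \eqref{noL1decay}. The paper's own proof is precisely this contradiction argument, so nothing further is needed.
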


Note that although the $L^1$ norm passes to the limit above,
we could lose $L^2$ norm along a collar; the basis $\Th_\infty^i$ obtained in the limit is thus in general not unitary.
Later, once we have constructed the subspaces $W_i$ mentioned briefly earlier, we will see that sequences within $W_i$ will also enjoy convergence of
their $L^2$ norms and indeed of all $L^p$ norms, $p\in[1, \infty]$, and thus that unitary families of elements in $W_i$ subconverge again to a unitary family in $\Hol(\Si,h)$.

We remark that as a consequence of the above lemma we could obtain the existence of subspaces of $\Hol(M,g_i)$ of dimension $k$ on which the ratio of the  $L^1$ and $L^2$ norms is decaying
to zero as $i\to \infty$. A more refined version of this statement is shown and needed in \cite{RT2}.

\begin{proof} (Lemma \ref{L1atleastepsilon}.)
For the first part of the lemma, the smooth local convergence follows simply from $L^2$-boundedness and holomorphicity of $\Th_i$
as we now explain. Given any compact subset $K$ of $\Sigma$ we can
choose $\de>0$ small enough such that for every $i\in \N$ we have $K\subset \Si_i^\de$, the $\de$-thick part of the surfaces $(\Si,f_i^*g_i)$.
On the $\de$-thick parts of the surfaces the $C^m$ norms of \emph{holomorphic} quadratic differentials
are controlled uniformly by their $L^1$ norm
$$\norm{f_i^*\Theta_i}_{C^m(K)}\leq \norm{f_i^*\Th_i}_{C^m(\Si_i^\de)}
\leq 
C_\de \norm{\Th_i}_{L^1(M,g_i)},$$
by Lemma \ref{lemma:holomorphic} in the appendix.
Using Arzela-Ascoli and taking a diagonal sequence we obtain smooth local convergence of a subsequence of $f_i^*\Th_i$ to a limit $\Theta_\infty$.
Since also the complex structures of $(\Si, f_i^*g_i)$
converge smoothly locally
(see Appendix \ref{DMsubsection}) to the complex structure $c$ of the limit surface,
the limit $\Th_\infty$ is a holomorphic quadratic differential on $(\Si,h,c)$.

Based on the uniform bound on the $L^2$ norms we can now show convergence of $L^1$ norms.
It is a consequence of the Collar Lemma \ref{Collar-lemma} --
see Lemma \ref{lem2.3}
-- that the area of the
$\de$-thin part of $(M,g_i)$ is less than $C\de$ for a uniform constant $C$ independent of $i$ and $\de>0$.
By Cauchy-Schwarz, we may thus estimate over this $\de$-thin part
$$\|\Th_i\|_{L^1(M\backslash M^\de_i,g_i)}\leq C\sqrt{\de}\|\Th_i\|_{L^2(M,g_i)} \leq C\sqrt{\de}.$$
Meanwhile, 
by Lemma \ref{lemma:thick}
we have convergence of the norms on the $\de$-thick parts $M_i^\de$ of $(M,g_i)$
$$\|\Th_i\|_{L^1(M_i^\de,g_i)}\to \|\Th_\infty\|_{L^1(\Si^\de,h)}.$$
Thus
$$\limsup_{i\to\infty} \|\Th_i\|_{L^1(M)}
=\limsup_{i\to\infty} (\|\Th_i\|_{L^1(M^\de_i)}
+ \|\Th_i\|_{L^1(M\backslash M^\de_i)})
\leq \|\Th_\infty\|_{L^1(\Si^\de)} + C\sqrt{\de}
\to \|\Th_\infty\|_{L^1(\Si)} $$
as $\de\downto 0$, and
$$\liminf_{i\to\infty} \|\Th_i\|_{L^1(M)}
\geq \liminf_{i\to\infty}\|\Th_i\|_{L^1(M^\de_i)}
= \|\Th_\infty\|_{L^1(\Si^\de)}
\to \|\Th_\infty\|_{L^1(\Si)} $$
as $\de\downto 0$, and so we have proved
\eqref{L1conv}.
Note that a by-product of this is that $\Th_\infty$ must lie in
$L^1$, and therefore in $\Hol(\Si,h)$. We have proved the first part of the lemma.

It remains to prove the second part of the lemma.
To do this, for each $i$, pick any unitary basis
$\Th_i^j$ of $L_i$ ($j=1,\ldots,n$). By assumption \eqref{noL1decay}, we have $\|\Th_i^j\|_{L^1}\geq \ep$ for all $i$. By the first part of the lemma, after passing to a subsequence in $i$, we may assume that there exist nonzero limits
$\Th^j_\infty\in \Hol(\Si,h)$ of the sequences $f_i^*(\Th_i^j)$.
It remains to show that these limits span an $n$-dimensional
subspace of $\Hol(\Si,h)$. If that were not the case, then we could find a unitary vector $b\in\C^n$ such that
$$\sum_{j=1}^n b_j \Th^j_\infty=0.$$
But then we could consider the sequence
$$\tilde\Th_i:=\sum_{j=1}^n b_j \Th^j_i$$
of unitary vectors in $L_i$, which converges smoothly locally
to zero by construction.
But by the first part of the lemma, and by the assumed lower bound on the $L^1$ norm from \eqref{noL1decay}, this must converge to a \emph{nonzero} limit.
\end{proof}

\subsection{Analysis of holomorphic quadratic differentials on collar regions}

The subspaces $W_i\subset \Hol(M,g_i)$ alluded to earlier will be defined in terms of the behaviour of elements along so-called collar regions, and we now discuss the geometry of collars.

Following Lemma \ref{Collar-lemma},
let $\cc(\ell)$
be the hyperbolic collar around a simple closed geodesic of length $\ell$, i.e. a region $(-X(\ell),X(\ell))\times S^1$ parametrised by local conformal coordinates
$(s,\theta)$ (or a complex coordinate $w=s+i\th$)
and equipped with the metric
$$\rho^2(ds^2+d\th^2), \qquad\text{ where }\rho=\frac{\ell}{2\pi \cos(\frac{\ell s}{2\pi})}.$$
We can equivalently think of the collar as an annulus $D_{e^X}\backslash D_{e^{-X}}$ in the complex plane parametrised by
$z:=e^w$ and equipped with an appropriate metric $\tilde\rho^2 dzd\bar z$.
A holomorphic quadratic differential $\Phi$ on the collar is given  in these coordinates by $\Phi=\phi(w)dw^2=\tilde\phi(z)dz^2$ for holomorphic functions $\phi(w)=z^2\tilde\phi(z)$ on the
cylinder respectively the annulus.

Decomposing $\tilde \phi$ as a Laurent series
$$\tilde\phi(z)=\sum_{n=-\infty}^\infty \tilde b_{n} z^n,$$
converging uniformly away from the boundary of the annulus, the function $\phi$ representing the holomorphic quadratic differential in the cylindrical coordinates can thus be
written as
\beq \label{eq:Laurent}
\phi(s,\th)=\sum_{n=-\infty}^\infty b_n e^{nw}
=\sum_{n=-\infty}^\infty b_n e^{ns}\,e^{in\th},\eeq
with $b_n:=\tilde b_{n-2}\in \C$.

We will split this function $\phi$, and hence the quadratic differential $\Phi$ on the collar, into its \emph{principal part}
$\phi_0 dw^2:=b_0(\Phi) dw^2=\tilde b_{-2}z^{-2}dz^2$ and the remaining, \emph{collar-decay} part
$\Phi-b_0(\Phi)dw^2$.

Because each of the terms in the sum \eqref{eq:Laurent} are $L^2$-orthogonal, even when restricted to circles $\{s\}\times S^1$,
the components $b_0(\Phi)dw^2$ and $\Phi-b_0(\Phi)dw^2$ are orthogonal on the collar with respect to the hyperbolic metric, and even on any sub-collar $(s_1,s_2)\times S^1\subset \cc(\ell)$.

We control the collar-decay components with the following lemma, which will be proved at the end of the section.

\begin{lemma}
 \label{lemma:collar}
Holomorphic quadratic differentials with zero principal part decay rapidly along the collars in the following uniform sense. There exist numbers $\de_0>0$ and $C<\infty$
such that any holomorphic quadratic differential $\Th$ on a collar $\cc(\ell)$, $0<\ell<2\arsinh(1)$, with principal part $b_0(\Th)dw^2=0$ satisfies
$$\|\Th \|_{L^\infty(\de-\text{thin} (\cc(\ell)))}\leq C\cdot e^{-\pi/\delta}\delta^{-2}\|\Th \|_{L^2(\de_0-\text{thick}(\cc(\ell)))}$$
for all numbers $0<\delta\leq \delta_0$.
\end{lemma}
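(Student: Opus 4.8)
The plan is to work directly with the Fourier--Laurent expansion \eqref{eq:Laurent} of $\Theta=\phi\,dw^2$ on the collar, exploiting that the hypothesis $b_0(\Theta)=0$ deletes the single non-decaying ($n=0$) mode. I would split $\phi=\phi^++\phi^-$ into its positive- and negative-frequency parts $\phi^\pm=\sum_{\pm n\ge 1}b_n e^{nw}$, and use three elementary facts on each circle $\{s\}\times S^1$: the modes are $L^2$-orthogonal (so $\phi^+$ and $\phi^-$ do not interact), the pointwise hyperbolic norm is $|\Theta|_g=|\phi|/\rho^2$, and the hyperbolic $L^2$ norm is $\int|\phi|^2\rho^{-2}\,ds\,d\theta$. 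Since $\phi^+$ grows as $s$ increases while $\phi^-$ grows as $s$ decreases, I would estimate $\phi^+$ from the outer (large-$s$) thick region and treat $\phi^-$ by the symmetry $s\mapsto -s$.

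The first step controls the coefficients by the thick-part norm. Writing $a$ for the inner edge of the $\de_0$-thick part on the positive side, and integrating $\|\Theta\|_{L^2}^2$ over a unit window $(a,a+1)\times S^1$ that the Collar Lemma \ref{Collar-lemma} guarantees lies inside the thick part, mode-orthogonality together with the uniform upper bound $\rho\le\arsinh(1)/\pi$ valid on the thick part yields
$$\|\phi^+(a,\cdot)\|_{L^2(S^1)}^2=2\pi\sum_{n\ge1}|b_n|^2e^{2na}\le C\,\|\Theta\|_{L^2(\de_0\text{-thick}(\cc(\ell)))}^2,$$
with $C$ universal (the growth of $\phi^+$ in $s$ ensures the window captures the full coefficient mass).

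The second step propagates this bound inward and inserts the collar geometry. For $s$ in the thin part, Cauchy--Schwarz applied to $\phi^+(s,\theta)=\sum_{n\ge1}(b_ne^{na})\,e^{n(s-a)}e^{in\theta}$ gives $|\phi^+(s,\theta)|\le C\,e^{s-a}\|\phi^+(a,\cdot)\|_{L^2(S^1)}$, hence $|\Theta|_g\le C\,e^{s-a}\rho(s)^{-2}\|\Theta\|_{L^2(\de_0\text{-thick})}$ on the positive half of the thin part. Differentiating shows $e^{s-a}\rho(s)^{-2}$ is increasing in $s$ on the thin part once $\de_0$ is fixed small, so its supremum sits at the thin-part edge $s_\de$. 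At that edge the Collar Lemma (cf. Lemma \ref{lem2.3}) gives $\rho(s_\de)=\de/\pi$, producing the factor $\de^{-2}$, while the elementary inequality $\arccos x\le\tfrac{\pi}{2}-x$ (equivalently $\arcsin x\ge x$) converts the displacement into $s_\de-a\le-\pi/\de+C(\de_0)$, producing the factor $e^{-\pi/\de}$ up to the fixed constant $e^{C(\de_0)}$. The symmetric estimate for $\phi^-$ on the negative half, added to this, gives the claim.

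I expect the geometric bookkeeping of this last step to be the main obstacle: one must translate the thresholds $\de_0,\de$ into sharp bounds on the conformal coordinate $s$ \emph{uniformly in the collar length $\ell$}, and extract precisely the exponent $\pi/\de$ and the power $\de^{-2}$ rather than weaker, $\ell$-dependent substitutes. The mechanism is a competition between the growing factor $e^{s-a}$ and the decaying curvature factor $\rho(s)^{-2}$, which conspire to peak exactly at $s_\de$; the clean input that pins the constant in the exponent is the bound $\arccos x\le\tfrac{\pi}{2}-x$ for $x\in[0,1]$, applied to $x=\ell/(2\de)$.
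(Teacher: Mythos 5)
Your strategy is essentially the paper's own: expand $\Theta$ in Fourier modes on the cylinder, use mode orthogonality over a unit-width window inside the $\delta_0$-thick part to control the coefficients, propagate the bound into the thin part, and observe that the competition between $e^{ns}$ and $\rho^{-2}(s)$ peaks at the thin-part edge $s=X_\delta$, where $\rho\approx\delta/\pi$ gives the factor $\delta^{-2}$ and the displacement from the thick part gives $e^{-\pi/\delta}$. (The paper takes its window at the collar end, $(X(\ell)-1,X(\ell))\times S^1$, rather than at $(X_{\delta_0},X_{\delta_0}+1)\times S^1$, and keeps individual coefficient bounds $|b_n|\leq C\sqrt{|n|}e^{-|n|X(\ell)}$ instead of your summed version; these are immaterial differences.)

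However, one step fails as written: the propagation inequality $|\phi^+(s,\theta)|\leq C\,e^{s-a}\|\phi^+(a,\cdot)\|_{L^2(S^1)}$ with a \emph{uniform} constant. Cauchy--Schwarz gives
\begin{equation*}
|\phi^+(s,\theta)|\leq (2\pi)^{-1/2}\,\|\phi^+(a,\cdot)\|_{L^2(S^1)}\Bigl(\sum_{n\geq 1}e^{2n(s-a)}\Bigr)^{1/2},
\qquad \sum_{n\geq 1}e^{2n(s-a)}=\frac{e^{2(s-a)}}{1-e^{2(s-a)}},
\end{equation*}
and the right-hand side is comparable to $e^{2(s-a)}$ only when $s$ is bounded away from $a$. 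But the edge $X_\delta$ of the $\delta$-thin part tends to $a=X_{\delta_0}$ as $\delta\uparrow\delta_0$, so on the full range $0<\delta\leq\delta_0$ demanded by the lemma your constant blows up. The repair is cheap, and is in effect what the paper does: your window integral actually yields the stronger statement $\sum_n n^{-1}|b_n|^2e^{2n(a+1)}\leq C\|\Theta\|^2_{L^2(\delta_0\text{-thick})}$, i.e.\ $|b_n|\leq C\sqrt{n}\,e^{-n(a+1)}$, because $\int_a^{a+1}e^{2ns}\,ds\geq c\,n^{-1}e^{2n(a+1)}$. With these weights the propagation becomes $|\phi^+(s,\theta)|\leq C\sum_{n\geq 1}\sqrt{n}\,e^{-n(a+1-s)}$, and since $a+1-s\geq 1$ for every $s$ in any $\delta$-thin part with $\delta\leq\delta_0$, the series converges uniformly and is bounded by $C\,e^{s-a}$. (Alternatively: take the reference circle at $a+\tfrac12$ inside the window, or prove the estimate for $\delta\leq\delta_0/2$ and rename $\delta_0$.) The rest of your argument --- monotonicity of $e^{s}\rho^{-2}(s)$ on the thin part, $\rho(X_\delta)\approx\delta/\pi$, and $X_\delta-X_{\delta_0}\leq -\pi/\delta+C(\delta_0)$ via $\arccos x\leq \tfrac{\pi}{2}-x$ --- is sound and matches the paper's endgame; only note that your claimed bound $\rho\leq\arsinh(1)/\pi$ on the thick part is not quite the right constant (near the collar ends $\rho$ approaches $\frac{\ell}{2\pi\tanh(\ell/2)}\approx \tfrac{1}{\pi}$ as $\ell\to 0$, and can reach $\sqrt{2}/\pi$), though any universal upper bound on $\rho$ suffices for your purposes.
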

\begin{rmk}
Given a holomorphic quadratic differential on a hyperbolic surface $(M,g)$ satisfying the assumptions of the above lemma on such a collar neighbourhood $\Col\subset (M,g)$, it is useful to observe that
Lemma \ref{lemma:collar} implies an estimate of the form
$$\|\Th \|_{L^\infty(\de-\text{thin} (\cc))}\leq C\cdot e^{-\pi/\delta}\delta^{-2}\|\Th \|_{L^1(M,g)},$$
since the $L^2$ norm over the thick part of the surface is controlled in terms of the $L^1(M,g)$ norm by Lemma \ref{lemma:Lp}.
\end{rmk}

The proof of this lemma will be given at the end of the section.
The quadratic differentials lying in $W_i$, which we will now define,  will be of the type in Lemma \ref{lemma:collar} on each degenerating collar.

\begin{lemma}(Introducing $W_i$.)\label{Wi_structure}
Given a sequence of hyperbolic surfaces $(M,g_i)$ degenerating to $(\Si,h)$ by
collapsing $k$ collars $\Col_i^j\cong\Col(\ell_i^j)$
as described in Proposition \ref{pro1.1},
we let
\beq \label{def:Wi} W_i:=\{\Th\in\Hol(M,g_i):\, b_0^{j}(\Th)dw^2=0 \text{ for every } j\in\{1\ldots k\}\,\}\eeq
be the subspace of holomorphic quadratic differentials that have vanishing principal part on every degenerating collar $\cc_i^j$, $j\in \{1\ldots k\}$.
Then:
\begin{enumerate}
\item[(i)] The elements of $W_i$ decay rapidly along the collar regions in the sense that for every $\delta>0$ and every $i\in\N$
\beq
\sup_{w\in W_i}\frac{\|w\|_{L^\infty(M\backslash M^\de_i,g_i)}}
{\|w\|_{L^2(M,g_i)}}\leq C\cdot \delta^{-2}e^{-\pi/\delta}
\eeq
for a uniform constant $C<\infty$ independent of $i$ and $\de$, where $M_i^\de=\de\thick(M,g_i)$.
\item[(ii)]
There exists $I_0\in \N$ such that for $i\geq I_0$,
$W_i$ is a $3(\gamma-1)-k$ dimensional subspace
of $\Hol(M,g_i)$.
\item[(iii)]
$W_i$ converges to $\Hol(\Si,h)$ in the sense that
for every $i\geq I_0$ there exists an $L^2$-unitary
basis $\{\Theta_i^j\}_{j=1}^{3\gamma-3-k}$ of $W_i$ that converges
$$f_i^*\Theta_i^j\to \Theta_\infty^j\in \Hol(\Si,h) \text{ as } i\to \infty$$
smoothly locally to an $L^2$-unitary basis of $\Hol(\Si,h)$.
Furthermore, the convergence preserves all $L^p$ norms, $p\in [1,\infty]$ in the sense that if a general sequence
of elements $\Th_i\in W_i$ converges
$f_i^*\Th_i\to \Th_\infty$ locally, then the limit $\Th_\infty$ is in $\Hol(\Si,h)$ and
for each $p\in[1,\infty]$ we have $\norm{\Th_\infty}_{L^p(\Si,h)}=\lim_{i\to\infty}\norm{\Th_i}_{L^p(M,g_i)}<\infty$.
\end{enumerate}
\end{lemma}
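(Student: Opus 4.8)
The plan is to treat part (i) as an essentially direct corollary of the collar estimate Lemma~\ref{lemma:collar}, and then to use part (i) as the engine that upgrades the soft $L^1$-compactness of Lemma~\ref{L1atleastepsilon} into the dimension count (ii) and the norm-preserving convergence (iii). For part (i), in the range $\de\leq\de_0$ the only $\de\thin$ regions of $(M,g_i)$ that survive are the $k$ degenerating collars $\cc_i^j$, on each of which every $w\in W_i$ has vanishing principal part by the very definition \eqref{def:Wi}; hence Lemma~\ref{lemma:collar} applies verbatim and gives $\norm{w}_{L^\infty(\de\thin(\cc_i^j))}\leq C\de^{-2}e^{-\pi/\de}\norm{w}_{L^2(M,g_i)}$ with $C$ independent of $i,\de,j$, and maximising over $j$ yields (i) for $\de\leq\de_0$. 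For $\de>\de_0$ I would split $M\setminus M_i^\de$ into the $\de_0\thin$ part, controlled as above, and the intermediate $\de_0\thick\cap\de\thin$ annuli, on which the thick-part estimate of Lemma~\ref{lemma:holomorphic} bounds $\norm{w}_{L^\infty}$ by $C_{\de_0}\norm{w}_{L^1(M)}\leq C_{\de_0}\norm{w}_{L^2(M)}$; both contributions are absorbed into $C\de^{-2}e^{-\pi/\de}$, which is bounded below once $\de>\de_0$. The one point needing care is that for the relevant $\de$ the surface-thin part meets only the pinching collars; this uses that the non-degenerating geodesics have length bounded below along the subsequence of Proposition~\ref{pro1.1}.

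The bridge to Lemma~\ref{L1atleastepsilon} is a uniform lower bound $\inf_{w\in W_i}\norm{w}_{L^1(M,g_i)}/\norm{w}_{L^2(M,g_i)}\geq\ep>0$, i.e.\ condition \eqref{noL1decay} for $L_i=W_i$. Normalising $\norm{w}_{L^2}=1$, part (i) together with the fact (Lemma~\ref{lem2.3}) that the $\de\thin$ part has area at most $C\de$ forces $\norm{w}_{L^2(M\setminus M_i^\de)}^2\leq(C\de^{-2}e^{-\pi/\de})^2\,C\de\to0$ as $\de\to0$, uniformly in $i$, so a fixed small $\de$ traps at least half the $L^2$ mass on $M_i^\de$; the thick-part bound $\norm{w}_{L^\infty(M_i^\de)}\leq C_\de\norm{w}_{L^1(M)}$ then gives $\tfrac12\leq\norm{w}_{L^2(M_i^\de)}^2\leq C_\de\norm{w}_{L^1(M)}^2$, hence the desired $\ep$. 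With \eqref{noL1decay} in hand, (ii) is a counting argument: $\dim_\C W_i\geq3(\ga-1)-k$ always, since $W_i$ is the kernel of the principal-part map $\Th\mapsto(b_0^j(\Th))_{j=1}^k$ into $\C^k$; and if $\dim_\C W_i\geq3(\ga-1)-k+1$ along a subsequence, Lemma~\ref{L1atleastepsilon} would produce a subspace of $\Hol(\Si,h)$ of that dimension, exceeding $\dim_\C\Hol(\Si,h)=3(\ga-1)-k$, which is absurd.

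For part (iii), applying the second half of Lemma~\ref{L1atleastepsilon} to $L_i=W_i$ (legitimate by \eqref{noL1decay} and (ii)) produces a unitary basis $\Theta_i^j$ with $f_i^*\Theta_i^j\to\Theta_\infty^j$ locally, spanning a subspace of $\Hol(\Si,h)$ of dimension $n=3(\ga-1)-k$, which is therefore all of $\Hol(\Si,h)$. The genuinely new content is that no $L^p$ norm is lost, and here part (i) is decisive. For any $\Th_i\in W_i$ with $f_i^*\Th_i\to\Th_\infty$ locally I would split $\norm{\Th_i}_{L^p(M,g_i)}^p$ over $M_i^\de$ and its complement: part (i) and the area bound on the $\de\thin$ part make the thin-part contribution $O\big((\de^{-2}e^{-\pi/\de})^p\,\de\big)\to0$ uniformly in $i$, while Lemma~\ref{lemma:thick} gives $\norm{\Th_i}_{L^p(M_i^\de)}\to\norm{\Th_\infty}_{L^p(\Si^\de)}$; letting $\de\to0$ exactly as in the $p=1$ argument already carried out for Lemma~\ref{L1atleastepsilon} yields $\lim_i\norm{\Th_i}_{L^p(M,g_i)}=\norm{\Th_\infty}_{L^p(\Si,h)}$. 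Finiteness of the right-hand side for every $p\leq\infty$ again comes from the vanishing principal part, which forces $\Th_\infty$ to have at worst a simple pole at each filled-in puncture, whose pointwise hyperbolic norm in fact tends to $0$ at the cusp, so $\Th_\infty\in L^\infty(\Si,h)$. Taking $p=2$ gives $\norm{\Theta_\infty^j}_{L^2(\Si,h)}=1$, and polarising the same estimate for the inner products gives $\langle\Theta_\infty^j,\Theta_\infty^{j'}\rangle_{L^2(\Si,h)}=\de_{jj'}$, so the limit basis is unitary.

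I expect the crux to be the norm preservation in (iii): the remark following Lemma~\ref{L1atleastepsilon} warns that generic $L^2$-bounded sequences bleed mass into the collars, and the entire purpose of the definition \eqref{def:Wi} is that annihilating the principal part suppresses precisely this loss. Thus the real work is the uniform collar decay (i) and its quantitative exploitation above; once the thin-part contributions are shown to vanish uniformly in $i$, the remainder reduces to the already-established thick-part convergence of Lemma~\ref{lemma:thick} and the soft compactness of Lemma~\ref{L1atleastepsilon}.
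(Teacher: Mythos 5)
Your proposal is correct in its essentials and follows the paper's own proof almost step for step: part (i) from Lemma \ref{lemma:collar} together with the fact that for small $\de$ the $\de\thin$ part of $(M,g_i)$ is contained in the degenerating collars (Proposition \ref{Thick-thin-2}); the uniform lower bound \eqref{noL1decay} for $L_i=W_i$ obtained by combining (i), the collar area bound of Lemma \ref{lem2.3} and the thick-part estimate of Lemma \ref{lemma:Lp}; part (ii) by the kernel dimension count $\dim W_i\geq 3(\ga-1)-k$ plus a contradiction via Lemma \ref{L1atleastepsilon}; and part (iii) by applying Lemma \ref{L1atleastepsilon} and then proving $L^p$-norm preservation through the thick/thin splitting, with the thin contribution vanishing uniformly in $i$ and $p$ as $\de\to 0$ and the thick contribution handled by Lemma \ref{lemma:thick}. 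This is exactly the structure of the paper's argument.

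The one genuine omission is at the end of part (iii). Lemma \ref{L1atleastepsilon}, and hence your construction, produces the unitary bases and their limit only \emph{after passing to a subsequence}, whereas the statement asserts that for the original sequence there exist unitary bases $\{\Theta_i^j\}$ of $W_i$, $i\geq I_0$, converging as $i\to\infty$ to a unitary basis of $\Hol(\Si,h)$. The paper closes this with a short additional argument which you should supply: if no modification of a choice of unitary bases by unitary transformations converges to $\{\Theta_\infty^j\}$, pick a subsequence that stays outside a fixed neighbourhood of $\{\Theta_\infty^j\}$ under all such modifications; the argument already given yields a further subsequence of unitary bases converging to some other unitary basis $\{\tilde\Theta_\infty^j\}$ of $\Hol(\Si,h)$, and composing with the fixed unitary transformation of $\Hol(\Si,h)$ carrying $\{\tilde\Theta_\infty^j\}$ to $\{\Theta_\infty^j\}$ gives convergence to $\{\Theta_\infty^j\}$ along that subsequence, a contradiction. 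Two smaller points: when you invoke your part (i) on a general locally convergent sequence $\Th_i\in W_i$ in (iii), its right-hand side involves the \emph{global} norm $\norm{\Th_i}_{L^2(M,g_i)}$, whose uniform boundedness should first be established (absorb the thin part using (i) itself, or quote Lemma \ref{lemma:collar} directly as the paper does, since its right-hand side involves only the $L^2$ norm over the $\de_0\thick$ part); and your justification for $\de>\de_0$ in (i) -- that $\de^{-2}e^{-\pi/\de}$ is bounded below -- is valid only on bounded ranges of $\de$, since this quantity decays like $\de^{-2}$ as $\de\to\infty$; this large-$\de$ edge case is, however, also left unaddressed by the paper itself.
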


\begin{proof} (Lemma \ref{Wi_structure}.)
Part (i) of the lemma
is an immediate consequence of the definition of $W_i$, the
key Lemma \ref{lemma:collar} about the behaviour of holomorphic quadratic differentials on collar regions and the fact that for $\de>0$ sufficiently small, the $\de$-thin part of the surface $(M,g_i)$ is
contained in the union of the collar regions $\cc_i^j$ -- see Proposition \ref{Thick-thin-2} in the appendix.

Using Part (i), we deduce that for $\de>0$ chosen sufficiently small,
$$\norm{w}_{L^2(M_i^\de,g_i)}\geq (1-C\de^{-2}e^{-\pi/\de})\norm{w}_{L^2(M,g_i)}\geq \frac12\norm{w}_{L^2(M,g_i)} \text{ for all } w\in W_i, i\in\N$$
which together with Lemma \ref{lemma:Lp}
implies a uniform lower bound on the $L^1$ norm of elements of $W_i$ of
\begin{equation}
\label{l_bd}
\inf_{w\in W_i}\frac{\norm{w}_{L^1(M,g_i)}}{\norm{w}_{L^2(M,g_i)}}\geq\frac12\inf_{w\in W_i}\frac{\norm{w}_{L^1(M,g_i)}}{\norm{w}_{L^2(M_i^\de,g_i)}}\geq \eps>0
\end{equation}
valid for all $i\in \N$.
We claim that there exists $I_0\in\N$ such that
$\dim(W_i)=3(\gamma-1)-k$ for all $i\geq I_0$.
Note that by definition,
$$\dim(W_i)\geq \dim(\Hol(M,g_i))-k=\dim(\Hol(\Si,h))=3(\gamma-1)-k,$$
so the only alternative to our claim is if, after passing to a subsequence, we have $\dim(W_i)=m>\dim(\Hol(\Si,h))$ for
each $i$.
By Lemma \ref{L1atleastepsilon}, using \eqref{l_bd}, we conclude that the spaces $W_i$ subconverge to a subspace of $\Hol(\Si,h)$ of the same dimension $m>\dim(\Hol(\Si,h))$ in the sense described in that lemma, which is impossible. This proves the claim, i.e. Part (ii) of the lemma.

Now that we know the dimension of $W_i$, for $i\geq I_0$, we can apply Lemma \ref{L1atleastepsilon} again to obtain, after taking a subsequence,  a sequence of unitary bases $\Th_i^j$ and a limit basis $\Th_\infty^j$. To prove Part (iii) of the lemma, even allowing ourselves to take this subsequence, we still have to show that this limit is unitary, and more generally that
all $L^p$ norms ($p\in[1,\infty]$) are preserved during local convergence as described in the lemma, which will follow from the rapid decay of the elements of $W_i$ on collars.

Let $\Th_i$ be any sequence of elements of $W_i$ that converges smoothly locally $f_i^*\Th_i\to \Th_\infty$. Then $\Th_\infty$ is again a holomorphic quadratic differential which,
as we shall prove now, has finite $L^1$ norm and is thus an element of $\Hol(\Si,h)$.

We recall
that for any $\de>0$ the $\de$-thick part $\Si_i^\de$ of $(\Si,f_i^*g_i)$ converges as described in Lemma \ref{lemma:thick} to the compact set $\Si^\de$ and thus that
for every $p\in [1,\infty]$
$$\norm{\Th_i}_{L^p(M_i^\de,g_i)}\to\norm{\Th_\infty}_{L^p(\Si^\de,h)},$$
by the smooth local convergence.

Let now $\de_0>0$ be a fixed number as in Lemma \ref{lemma:collar}. Since $\norm{\Th_\infty}_{L^2(\Si^{\de_0},h)}$ is bounded, we have a uniform bound on
$\norm{\Th_i}_{L^2(M_i^{\de_0},g_i)}$ so that according to Lemma \ref{lemma:collar} the norms over the thin part of the surfaces $(M,g_i)$
$$\norm{\Th_i}_{L^p(M\setminus M_i^\de,g_i)}\leq C\cdot \norm{\Th_i}_{L^\infty(M\setminus M_i^\de,g_i)} \leq C\de^{-2}e^{-\pi/\de}\to 0  $$
converge to zero uniformly in $i$  and $p\in[1,\infty]$ as $\de\to 0$.

Thus the global $L^p$ norms are bounded uniformly and converge
 $$\norm{\Th_\infty}_{L^p(\Si,h)}=\sup_{\de>0}\norm{\Th_\infty}_{L^p(\Si^\de,h)}=
\sup_{\de>0}\lim_{i\to\infty} \norm{\Th_i}_{L^p(M_i^\de,g_i)}=\lim_{i\to\infty} \norm{\Th_i}_{L^p(M,g_i)}$$
for every $p\in [1,\infty]$.

The preservation particularly of the $L^2$ norm in the above convergence implies that the basis $\{\Th_\infty^j\}$ of $\Hol(\Si,h)$ that is obtained above as a limit of (a subsequence of) unitary bases $\{\Th_i^j\}$ of $W_i$, is again unitary.

That would complete the proof of Part (iii) of the lemma, except that we have allowed ourselves to take a subsequence above.
However, it is easy to return to the original (pre-subsequence) sequence and take an extended sequence of unitary bases $\{\Th_i^j\}$, and check that after modifying them by a sequence of unitary transformations, they converge to $\{\Th_\infty^j\}$: If not, then let us take a subsequence which, however we modify with a sequence of unitary transformations, stays outside some neighbourhood of $\{\Th_\infty^j\}$. Following the argument above,
we may pass to a further subsequence to get convergence to some other unitary limit basis $\{\tilde \Th_\infty^j\}$, but then by modifying this whole subsequence by an appropriate fixed unitary transformation, we get convergence to $\{\Th_\infty^j\}$, which is a contradiction.

\end{proof}

\begin{rmk}
In our considerations above, we have derived properties of elements of $W_i$ by using the fact that their principal parts on each collar vanish. In practice, this can be weakened. For example, to have preservation of the $L^2$ norm in Part (iii) above, we would only need that
\beq \label{ass:bij} b_0^{j}(\Th_i)\cdot (\ell_i^{j})^{-\frac{3}{2}}\to 0 \text{ as }i\to\infty \text{ for each } j\in\{1\ldots k\}.\eeq
This is because we only need that the principal parts of $\Th_i$ are vanishing in the sense that the $L^2$ norms of 
$b_0^{j}(\Th_i)dw^2$ converge to zero, and if we adopt the normalisation convention that $\abs{dw^2}=2\rho^{-2}$, we have, for
$0<\de<\arsinh(1)$ and $\ell\in (0,2\de)$,
\beqa \label{est:delta-thin2}
\|dw^2\|_{L^2(\de\text{-thin}(\cc(\ell)))}^2&=2\pi\int_{-X_\delta}^{X_\delta} |dw^2|^2\rho^2 ds=8\pi\int_{-X_\delta}^{X_\delta} \rho^{-2} ds
&=\frac{C}{\ell^2} \int_{-X_\delta}^{X_\delta}\cos^2\left(\frac{\ell s}{2\pi}\right) ds\\
&=C_0\ell^{-3}+O(\de^{-3})\eeqa
for a constant $C_0>0$ independent of $\ell$ and $\de$,
and where
\beq \label{est:Xdelta}
X_\de(\ell)=\frac{\pi^2}{\ell}-\frac{2\pi}{\ell}\arcsin\left(\frac{\sinh(\ell/2)}{\sinh(\de)}\right)=\frac{\pi^2}{\ell}-\frac{\pi}{\de}+O(1)\eeq
was defined in Lemma \ref{lem2.3} so
that the $\de$-thin part of a collar $\cc(\ell)$ is given by $(-X_\de(\ell),X_\de(\ell))\times S^1$.
\end{rmk}

We finally give the proof of the key Lemma \ref{lemma:collar} on the decay on collars of holomorphic quadratic differentials with zero principal part.
\begin{proof} (Lemma \ref{lemma:collar}.)
Let $0<\de_0<\arsinh(1)$ be a constant to be fixed later on. Given any number $0<\ell<2\arsinh(1)$, we consider the corresponding collar region $\cc(\ell)$ around a geodesic of length $\ell$ as described in the Collar Lemma \ref{Collar-lemma}.
We first remark that since
the $\de_0$-thin part of $\cc(\ell)$ is empty in the case that $\ell\geq 2\de_0=:\ell_0$,  we may restrict our attention to values $0<\ell<\ell_0$. We recall that the subcylinder
$(-X_{\de_0}(\ell),X_{\de_0}(\ell))\times S^1$ describing the $\de_0$-thin part of the collar $\cc(\ell)=(-X(\ell),X(\ell))\times S^1$ is characterised by \eqref{est:Xdelta} and is
thus bounded away uniformly from the boundary of $\cc(\ell)$, say
$$X(\ell)-X_{\de_0}(\ell)\geq 1 \text{ for all } 0<\ell<\ell_0,$$
if $\de_0$ is initially chosen small enough. This is the only constraint we impose on $\de_0$.

Let now $\Theta$ be a holomorphic quadratic differential on such a collar $\cc(\ell)$, $0<\ell\leq \ell_0$, without loss of generality normalised to satisfy $\norm{\Th}_{L^2(\de_0-\text{thick}(\cc(\ell)))}=1$,
and suppose that $\Theta$ has zero principal part, i.e. that it is given by a converging sum of the form
$$\Th=\sum_{n\in\Zn} b_n e^{ns}\,e^{in\th}dw^2.
$$
Using that all terms in this sum are $L^2$-orthogonal on subcylinders 
as well as that $\abs{dw^2}=2\rho^{-2}$ (with our normalisation)
we thus obtain

\beqa 1&=\norm{\Theta}_{L^2(\de_0-\text{thick}(\cc(\ell)))}^2
=\sum_{n\in \Zn}\abs{b_n}^2\norm{e^{ns}dw^2}_{L^2(\de_0-\text{thick}(\cc(\ell)))}^2\\
&\geq 8\pi \sum_{n\in \N} \left[\abs{b_n}^2\int_{X(\ell)-1}^{X(\ell)}e^{2ns}\rho^{-2}(s) ds+\abs{b_{-n}}^2\int_{-X(\ell)}^{-X(\ell)+1}e^{-2ns}\rho^{-2}(s) ds\right]\\
&\geq c \sum_{n\in\Zn}\abs{b_n}^2 \abs{n}^{-1}e^{2\abs{n}X(\ell)}\label{est:bn}
\eeqa
for a uniform constant $c>0$.
Here the last inequality follows from the uniform upper bound $\rho(s)\leq C$ for the conformal factor on the ends of collars around geodesics of bounded length.

We conclude in particular that $\abs{b_n}\leq C\sqrt{\abs{n}}e^{-\abs{n}X(\ell)}$ for every $n\in\Zn$, resulting in a bound of
\beq \label{est:Linfty}
\norm{\Th}_{L^\infty(\de\text{-thin}(\cc(\ell)))}\leq C\sum_{n\in \Zn} \sqrt{n}e^{-\abs{n}X(\ell)}\sup_{s\in [-X_\delta(\ell),X_\delta(\ell)]}
e^{ns}\abs{dw^2}(s),
\eeq
valid for arbitrary values of $0<\de\leq \de_0$ and with a uniform constant $C<\infty$.
We now remark that for each $\ell$ and each $n\in \Zn$ the function
$$s\mapsto  e^{ns}\abs{dw^2}(s)=2e^{ns}\rho^{-2}(s)=\frac{8\pi^2}{\ell^2}\cdot \cos^2\left(\frac{\ell s}{2\pi}\right)e^{ns}$$
is monotone on the whole interval $(-X(\ell),X(\ell))$ so the supremum in \eqref{est:Linfty} is achieved at one end of the $\de$-thin part of the collar. We obtain the desired bound of
$$
\norm{\Th}_{L^\infty(\de\text{-thin}(\cc(\ell)))}\leq C\sum_{n\in \Zn} \sqrt{n}e^{-\abs{n}(X(\ell)-X_\de(\ell))}\rho^{-2}(X_\de(\ell))\leq Ce^{-\frac{\pi}{\de}}\de^{-2}.
$$

\end{proof}

\subsection{Applications of the structure theory for holomorphic quadratic differentials}

As a consequence of the results derived in the previous section we will now obtain a continuity result for the projection of general quadratic differentials onto the subspaces $W_i\subset \Hol(M,g_i)$
of holomorphic quadratic differentials described in Lemma \ref{Wi_structure}.
In a sense to be made precise, the projections onto $W_i$
will converge to the projection onto the entire space
$\Hol(\Si,h)$ of integrable holomorphic quadratic differentials on the limit.

Before stating this result let us first recall that the space of holomorphic quadratic differentials on a compact surface $(M,g)$ is finite dimensional and that all its elements are bounded.
Thus the $L^2(M,g)$-orthogonal projection $P_g=P_g^{\Hol(M,g)}$ onto $\Hol(M,g)$ satisfies
an estimate of the form $\norm{P_g(\Psi)}_{L^2(M,g)}\leq C\norm{\Psi}_{L^1(M,g)}$, where $\Psi$ is any quadratic differential
with finite $L^2$ norm,  
and can thus be extended continuously to a projection from the space of all quadratic differentials with finite $L^1$ norm to $\Hol(M,g)$,
which we still denote by $P_g$.

Similarly, by virtue of the $L^\infty$ bounds on integrable holomorphic quadratic differentials given in Lemma \ref{lemma:poles-norms}, we can extend the $L^2(\Si,h)$-orthogonal projection $P_h^{\Hol(\Si,h)}$ to
the space of all quadratic differentials on the limit surface that have finite $L^1$ norm.

We furthermore remark that
given a sequence $\Psi_i$ of quadratic differentials on degenerating surfaces we can think of the sequence $f_i^*\Psi_i$
either as a sequence of
quadratic differentials with respect to the varying metrics $f_i^*g_i$ or as a general sequence of (complex)
tensors on the fixed Riemannian surface $(\Si,h)$,
thus allowing
us to talk about convergence of these tensors say in $L^1_{loc}(\Si,h)$.
Furthermore, any tensor $\Psi_\infty$ obtained as a limit of a sequence of quadratic differentials
$f_i^*\Psi_i\to \Psi_\infty$ in $L_{loc}^1(\Si,h)$ is again a quadratic differential now with respect to $(\Si,h)$ owing to the smooth local convergence of the complex structures
(as in Section \ref{DMsubsection}).
If the norms $\norm{\Psi_i}_{L^1(M,g_i)}$ are bounded and thus also $\norm{\Psi_\infty}_{L^1(\Si,h)}<\infty$, the projection of the limit $\Psi_\infty$ to the space $\Hol(\Si,h)$ is well defined, as remarked above,
and we can discuss the continuity of the projections in the following sense.

\begin{thm}
\label{neck_lemma}
Let $(M,g_i)$ be a sequence of degenerating hyperbolic surfaces converging to a  hyperbolic punctured surface $(\Si,h)$ by collapsing $k$ collars as described in Proposition \ref{pro1.1}
and
let $W_i\subset\Hol(M,g_i)$ be the $3(\gamma-1)-k$ dimensional subspace defined in Lemma \ref{Wi_structure} that consists of elements of $\Hol(M,g_i)$ decaying rapidly on the degenerating
collars.
\begin{enumerate}
 \item[(i)] Suppose we have a sequence of quadratic differentials $\Psi_i$ on $(M,g_i)$ satisfying a uniform bound $\|\Psi_i\|_{L^1(M,g_i)}\leq C$ 
which converges
$$f_i^*\Psi_i\to\Psi_\infty \text{ in } L^1_{loc}(\Si,h).$$
Suppose further that we have a sequence of holomorphic quadratic differentials $\Th_i\in W_i$
such that
$$f_i^*\Th_i\to\Th_\infty$$
smoothly locally.
Then
\beq\int_M \langle \Psi_i,\Th_i\rangle d\mu_{g_i}\to
\int_\Sigma \langle \Psi_\infty,\Th_\infty\rangle d\mu_{h}.\label{eq:neck-lemma}\eeq
\item[(ii)] 
The
$L^2(M,g_i)$-orthogonal projection $P_{g_i}^{W_i}$ onto $W_i$ converges to the 
$L^2(\Si,h)$-orthogonal projection $P_h^{\Hol(\Si,h)}$ onto the space of integrable holomorphic
quadratic differentials $\Hol(\Si,h)$ on the limit surface
in the following sense:

For any sequence $\Psi_i$ of quadratic differentials on $(M,g_i)$ with uniformly bounded $L^1$ norms that converges $f_i^*\Psi_i\to \Psi_\infty$
locally in $L^1(\Si,h)$ the projections converge
$$f_i^*(P_{g_i}^{W_i}(\Psi_i))\to P_h^{\Hol(\Si,h)}(\Psi_{\infty}) \text{ smoothly locally }$$ while preserving any $L^p$ norm
$$\lim_{i\to \infty}\norm{P_{g_i}^{W_i}(\Psi_i)}_{L^p(M,g_i)}= \norm{P_h^{\Hol(\Si,h)}(\Psi_\infty)}_{L^p(\Si,h)}
, \qquad 1\leq p\leq \infty.$$
\end{enumerate}
\end{thm}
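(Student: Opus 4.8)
The plan is to reduce both parts to a thick/thin decomposition controlled by the rapid collar decay of elements of $W_i$ recorded in Lemma \ref{Wi_structure}(i), and then to read off Part (ii) from Part (i) together with the basis convergence of Lemma \ref{Wi_structure}(iii). Throughout I use that $\sup_i\norm{\Th_i}_{L^2(M,g_i)}<\infty$: since $\Th_i\in W_i$ and $f_i^*\Th_i\to\Th_\infty$ smoothly locally, the last assertion of Lemma \ref{Wi_structure}(iii) shows these $L^2$ norms converge to $\norm{\Th_\infty}_{L^2(\Si,h)}<\infty$.

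For Part (i), fix $\ep>0$ and split $\int_M\langle\Psi_i,\Th_i\rangle d\mu_{g_i}$ over the $\de$-thick part $M_i^\de$ and its complement. On the thin part, H\"older's inequality and Lemma \ref{Wi_structure}(i) give $|\int_{M\setminus M_i^\de}\langle\Psi_i,\Th_i\rangle d\mu_{g_i}|\leq \norm{\Th_i}_{L^\infty(M\setminus M_i^\de)}\norm{\Psi_i}_{L^1(M,g_i)}\leq C\de^{-2}e^{-\pi/\de}\norm{\Th_i}_{L^2(M,g_i)}\norm{\Psi_i}_{L^1(M,g_i)}$, which by the uniform bounds $\norm{\Psi_i}_{L^1}\leq C$ and $\sup_i\norm{\Th_i}_{L^2}<\infty$ is at most $C\de^{-2}e^{-\pi/\de}$ uniformly in $i$. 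On the limit side, $\Th_\infty\in\Hol(\Si,h)$ is bounded by Lemma \ref{lemma:poles-norms}, so $\int_{\Si\setminus\Si^\de}|\langle\Psi_\infty,\Th_\infty\rangle| d\mu_h\leq C\norm{\Psi_\infty}_{L^1(\Si\setminus\Si^\de,h)}\to 0$ as $\de\downto 0$ because $\Psi_\infty\in L^1(\Si,h)$. I therefore choose $\de>0$ small enough that the first thin-part contribution lies below $\ep/3$ uniformly in $i$ and the limiting thin-part contribution also lies below $\ep/3$.

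It remains to show that, for this fixed $\de$, the thick-part integrals converge. Pulling back by $f_i$ rewrites $\int_{M_i^\de}\langle\Psi_i,\Th_i\rangle d\mu_{g_i}$ as $\int_{\Si_i^\de}\langle f_i^*\Psi_i,f_i^*\Th_i\rangle_{f_i^*g_i}\, d\mu_{f_i^*g_i}$, with the thick parts $\Si_i^\de$ converging to $\Si^\de$ by Lemma \ref{lemma:thick}. On this essentially fixed region I would combine $f_i^*g_i\to h$ smoothly (so the fibrewise inner products and volume elements converge uniformly), $f_i^*\Th_i\to\Th_\infty$ uniformly, and $f_i^*\Psi_i\to\Psi_\infty$ in $L^1$: an add-and-subtract argument---first replacing $f_i^*\Th_i$ by $\Th_\infty$, with error at most $\norm{f_i^*\Th_i-\Th_\infty}_{L^\infty(\Si^\de)}\norm{f_i^*\Psi_i}_{L^1}$, then replacing the $f_i^*g_i$-quantities by the $h$-quantities, and finally using $L^1$ convergence of $f_i^*\Psi_i$ against the bounded $\Th_\infty$---yields convergence to $\int_{\Si^\de}\langle\Psi_\infty,\Th_\infty\rangle_h d\mu_h$. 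For $i$ large this lies within $\ep/3$ of $\int_\Si\langle\Psi_\infty,\Th_\infty\rangle d\mu_h$, and assembling the three estimates proves \eqref{eq:neck-lemma}.

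For Part (ii), I would express the projections through unitary bases. Let $\{\Th_i^j\}$ be the unitary basis of $W_i$ from Lemma \ref{Wi_structure}(iii), with $f_i^*\Th_i^j\to\Th_\infty^j$ smoothly locally onto a unitary basis $\{\Th_\infty^j\}$ of $\Hol(\Si,h)$. Then $P_{g_i}^{W_i}(\Psi_i)=\sum_j a_i^j\,\Th_i^j$ and $P_h^{\Hol(\Si,h)}(\Psi_\infty)=\sum_j a_\infty^j\,\Th_\infty^j$, where $a_i^j=\int_M\langle\Psi_i,\Th_i^j\rangle d\mu_{g_i}$ and $a_\infty^j=\int_\Si\langle\Psi_\infty,\Th_\infty^j\rangle d\mu_h$. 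Applying Part (i) with $\Th_i=\Th_i^j$ gives $a_i^j\to a_\infty^j$ for each of the finitely many $j$, so $f_i^*(P_{g_i}^{W_i}(\Psi_i))=\sum_j a_i^j\, f_i^*\Th_i^j\to\sum_j a_\infty^j\,\Th_\infty^j=P_h^{\Hol(\Si,h)}(\Psi_\infty)$ smoothly locally; since $P_{g_i}^{W_i}(\Psi_i)$ is a sequence in $W_i$ converging smoothly locally to an element of $\Hol(\Si,h)$, the $L^p$-norm preservation is precisely the last assertion of Lemma \ref{Wi_structure}(iii). The one delicate point, and what I expect to be the main obstacle, is the thick-part convergence in Part (i): reconciling the merely $L^1_{loc}$ convergence of $\Psi_i$ with the $i$-dependent integration domain $\Si_i^\de$. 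This is absorbed by the smooth convergence of thick parts, but if one wants to avoid moving domains one can instead integrate over a fixed compact exhaustion of $\Si^\de$ and re-absorb the thin collar-boundary remainder via Lemma \ref{Wi_structure}(i).
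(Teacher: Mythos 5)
Your proposal is correct and follows essentially the same route as the paper's proof: a thick/thin decomposition in which the thin-part integral is killed by the rapid collar decay of Lemma \ref{Wi_structure}(i) paired with the uniform $L^1$ bound on $\Psi_i$, the thick-part integrals converge via the convergence of thick parts in Lemma \ref{lemma:thick}, and Part (ii) is reduced to Part (i) by expanding both projections in the convergent unitary bases supplied by Lemma \ref{Wi_structure}(iii), whose last assertion also gives the $L^p$-norm preservation. The only difference is cosmetic: you spell out by hand (via add-and-subtract) the product convergence $\langle f_i^*\Psi_i, f_i^*\Th_i\rangle \to \langle \Psi_\infty,\Th_\infty\rangle$ in $L^1_{loc}$ that the paper delegates to the second claim of Lemma \ref{lemma:thick}.
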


The first part of the lemma will be used in the proof of the second part, which in turn is required in the proof of the main Theorem \ref{thm:asymptotics}.
\begin{proof} [Proof of Theorem \ref{neck_lemma}]
Let $\Psi_i$ and $\Th_i$ be as in Theorem \ref{neck_lemma} (i)
and recall that by the remarks made earlier in this section all objects in the theorem are well defined.
Given any $\delta>0$, we now use that the $\delta$-thick part $\Si_i^\de$ of $(\Sigma,f_i^*g_i)$ converges as described in Lemma \ref{lemma:thick} to the (compact) $\de$-thick part $\Si^\de$ of the limit surface.
Combined with the smooth local convergence of the metrics and the local $L^1(\Si,h)$ convergence of
$\langle\Psi_i,\Th_i\rangle\circ f_i=\langle f_i^*\Psi_i,f_i^*\Th_i\rangle\to \langle\Psi_\infty,\Th_\infty\rangle$ we thus find that
\beq \label{eq:convergence}
\int_{M_i^\delta}\langle\Psi_i,\Th_i\rangle d\mu_{g_i}=\int_{\Sigma_i^\delta}\langle f_i^*\Psi_i,f_i^*\Th_i\rangle d\mu_{f_i^*g_i}\to
\int_{\Sigma^{\delta}}\langle \Psi_{\infty},\Theta_\infty\rangle d\mu_h\eeq
for every $\de>0$ -- see also Lemma \ref{lemma:thick} in the appendix.

We obtain the first claim of the theorem passing to the limit $\de\to 0$ since the integrals in the above formula converge
uniformly to the corresponding integrals over the full surface as $\de\to0$ thanks to the estimate
$$\abs{\int_{M\setminus M_i^\delta}\langle \Psi_i,\Theta_i\rangle d\mu_{g_i}}\leq \norm{\Psi_i}_{L^1}\cdot \norm{\Theta_i}_{L^\infty(M\setminus M_i^\delta)}\leq C\cdot\delta^{-2}e^{-\pi/\delta}$$
resulting from Lemma \ref{Wi_structure}.

For the proof of the second statement we let $\{\Th_\infty^j\}$ be any unitary basis of $\Hol(\Si,h)$ and, using
Lemma \ref{Wi_structure}, choose for each $W_i$ a unitary basis $\{\Th_i^j\}$ such that $f_i^*\Th_i^j\to \Th_\infty^j$ smoothly locally for every $j\in\{1,\ldots 3(\gamma-1)-k\}$
as $i\to \infty$.
Then given any sequence of quadratic differentials $\Psi_i$ as in (ii), and abbreviating $\dlangle \Psi, \Th\drangle_{(M,g)}=\int_M\langle \Psi,\Th\rangle d\mu_g$, we find
\beqa f_i^*(P_{g_i}^{W_i}(\Psi_i))=& f_i^*\left(\sum_{j=1}^{3(\gamma-1)-k}\dlangle \Psi_i, \Th_i^j\drangle_{(M,g_i)} \cdot \Th_i^j\right)
=\sum_{j=1}^{3(\gamma-1)-k} \dlangle \Psi_i, \Th_i^j\drangle_{(M,g_i)} \cdot f_i^*\Th_i^j
\nn
\\
\to& \sum_{j=1}^{3(\gamma-1)-k}\dlangle \Psi_\infty, \Th_\infty^j\drangle_{(\Si,h)} \cdot \Th_\infty^j =P_h^{\Hol(\Si,h)}(\Psi_\infty),
\eeqa
smoothly locally, using the first part of the lemma.
The final claim in (ii) follows from Lemma \ref{Wi_structure} (iii).
\end{proof}

\section{Asymptotic convergence in the general degenerate case}
\label{mainthmpf}
Now we have developed enough theory for quadratic differentials in order to prove our main theorem.
\begin{proof}(Theorem \ref{thm:asymptotics}.)
Let $(u,g)$ be a global solution of \eqref{flow} as in Theorem \ref{thm:asymptotics}.
Recall that $(u,g)$ is smooth away from finitely many times and that the energy decays according to
\begin{equation}
\label{energy_decay}
\frac{dE}{dt}=-\int_M |\tau_g(u)|^2+\left(\frac{\eta}{4}\right)^2 |Re(P_g(\Phi(u,g)))|^2
\end{equation}
on any interval on which the solution is smooth. 
We can thus choose a sequence $t_i\to \infty$ such that
$$\norm{\tau_g(u)(t_i)}_{L^2(M,g(t_i))}\to 0 \text{ and  } \norm{P_{g}(\Phi(u,g))(t_i)}_{L^2(M,g(t_i))}\to 0 \text { as } i\to \infty.$$

Passing to a further subsequence
we obtain that the surfaces $(M,g(t_i))$ degenerate to a  hyperbolic punctured surface as described in Proposition \ref{pro1.1}
(modulo diffeomorphisms $f_i$).

Since we are dealing with a gradient flow for energy,
the energies of $u_i:=u(t_i)\circ f_i$ computed with respect to the varying metrics $G_i:=f_i^*g(t_i)$ are uniformly bounded, and indeed
$E(u_i,G_i)\leq E(u_0,g_0)$. Since
the metrics $G_i\to h$ converge locally uniformly, we also have a uniform upper bound for the energies of $u_i$
given by
$$\limsup_{i\to \infty} E(u_i,(K,h))\leq E(u_0,g_0),$$
valid for every compact subset $K\subset\subset \Si$.

Passing to a subsequence we may thus assume that $u_i$ converges weakly in $H^1_{loc}(\Si,h)$ to a limit map $u_\infty$ with finite energy,
and we claim that $u_\infty$ is both harmonic as well as weakly conformal.

The proof that $u_\infty$ is harmonic is very similar to the non-degenerate case \cite{R-T}. We let  $\eps_0=\eps_0(N)>0$ be such that the basic $\eps$-regularity estimate
\beq \label{est:stand-reg}\int_{D_r} \abs{\na^2 u}^2 \leq \frac{C}{r^2} E(u;D_{2r})+C\norm{\tau(u)}_{L^2(D_{2r})}^2\eeq
is valid for all maps into $N$ with energy less than $\eps_0$ on the Euclidean disc (cf. Lemma 3.3 of \cite{R-T}) and consider the finite set of concentration points
$$S:=\{p\in \Si: \limsup_{i\to \infty} E(u_i,(U,h))>\eps_0 \text{ for every neighbourhood }U \text{ of } p \}.$$
As in the non-degenerate case, from \eqref{est:stand-reg} and the convergence of metrics we obtain uniform $H^2$ bounds for $u_i$ on compact subsets of $\Si\setminus S$ which
allow us to extract a subsequence that converges weakly in $H^2_{loc}(\Si\setminus S)$ and strongly in $W^{1,p}_{loc}(\Si\setminus S)$, $1\leq p<\infty$, to a limit which must of course agree with $u_\infty$ where it is defined.
Since we chose the sequence of times $t_i$ in such a way that
$$\norm{\tau_{G_i}(u_i)}_{L^2(\Si,G_i)}\to 0$$
the limit $u_\infty$ must be harmonic, initially on $\Si\setminus S$ but then, by the removable singularity
theorem \cite{SU} and the finiteness of the energy,
on all of $\Si$, and indeed on the compactification $\overline \Si$ obtained from $(\Si,c)$ by filling in each of the $2k$ punctures. Note in particular, that $u_\infty$ is smooth throughout $\Si$, as is its extension to $\overline\Si$.

We conclude in particular that the Hopf-differential $\Phi(u_ \infty, h)$ is holomorphic. Furthermore, its $L^1$ norm is bounded by the total energy of $u_\infty$ on
$(\Si,h)$ and is thus finite, which means that $\Phi(u_\infty,h)\in \Hol(\Si,h)$.
In order to prove that $u_\infty$ is (weakly) conformal
(i.e. that $\Phi(u_\infty,h)\equiv 0$) it is thus enough to show that the projection $P_h^{\Hol(\Si,h)}(\Phi(u_\infty,h))$ vanishes.

But we know that $\|\Phi(u(t_i),g(t_i))\|_{L^1(M,g(t_i))}$ is bounded
and that also the $L^1$ norm of its antiholomorphic derivative
$\|\bar\partial \Phi(u(t_i),g(t_i))\|_{L^1(M,g(t_i))}$ is bounded.

Locally, we can thus apply the argument of the non-degenerate case: the Compactness Lemma 2.3 of \cite{R-T} combined with the convergence of complex structures (and thus the convergence
of \emph{isometric coordinate charts} as defined in the proof of Lemma \ref{lemma:holomorphic}) implies that 
$\Phi(u_i,G_i)$
converges to a limit $\Psi_\infty$ which we know is again a quadratic differential on $(\Si,h)$. Indeed, due to the strong $H^1$ convergence of $u_i$
on the complement of $S$, the limit $\Psi_\infty$ must agree with the Hopf-differential
$\Phi(u_\infty,h)$ of the limiting map.

Since the projections $P_{g(t_i)}^{W_i}$ onto the subspaces $W_i\subset \Hol(M,g(t_i))$ defined in Lemma \ref{Wi_structure} converge
to $P_h^{\Hol(\Si,h)}$ as described in Theorem \ref{neck_lemma} we conclude that
$$P_h^{\Hol(\Si,h)}(\Phi(u_\infty,h))=\lim_{i\to \infty}
f_i^*\left[P_{g(t_i)}^{W_i}(\Phi(u(t_i),g(t_i)))\right]= 0,$$
where the last equality is due to the projection of $\Phi(u(t_i),g(t_i))$ onto the full space $\Hol(M,g(t_i))$ converging to zero.

Thus $u_\infty$ is a weakly conformal harmonic map on $\Si$ and thus \cite{GOR}, on each of its connected components, either a branched minimal immersion or constant.
\end{proof}

\appendix

\section{Degenerating hyperbolic surfaces}
In this appendix we collect several important definitions and facts from the theory of hyperbolic surfaces that have been used throughout the main part of this paper, emphasising the geometric analysis aspects that we require.

\subsection{Deligne-Mumford compactness theorem}
\label{DMsubsection}

The classical Mumford Compactness Theorem \cite{mumford, tromba} tells us that if we have a sequence of closed hyperbolic Riemann surfaces $(M,g_i,c_i)$, and the length $\ell(g_i)$ of the shortest closed geodesic is bounded uniformly away from zero, then a subsequence
converges to a limiting hyperbolic surface
$(M,g_\infty,c_\infty)$ of the same topological type, in the sense that there exists a family of diffeomorphisms $f_i:M\to M$ such that
$$f_i^*g_i\to g_\infty \text{ and } f_i^*c_i\to c_\infty \text{ smoothly on } M.$$

Here, the convergence of a sequence of complex structures ${\mathfrak{c}}_i$ (here $f_i^*c_i$)
to a limit complex structure $\mathfrak{c}$ (here $c_\infty$) means that around each point in the underlying space (here $M$) there exists a neighbourhood $U$ on which there are complex coordinates $z_i$ (with respect to $\mathfrak{c}_i$) and
$z$ (with respect to $\mathfrak{c}$) such that $z_i \to z$ in $C^\infty$ on $U$.

We will primarily be interested in the general case that $\ell(g_i)$ has no uniform positive lower bound, in which case we will get a more general limit of the following form:

\begin{defn}
$(\Si,c)$ is called a Riemann surface of genus $\gamma\in\N_0$ and with $K\in\N_0$ punctures, if $\Sigma=\n\setminus\{p^1,...,p^K\}$, where $(\n, \overline{c})$ is a closed Riemann surface of genus $\gamma$, $\{p^1,...,p^K\} \subset \n$ and $c$ is the complex structure induced from $\overline{c}$. $(\Si,c)$ is said to be of general type, if $2\gamma+K>2$. By the uniformization theorem, any Riemann surface $(\Si,c)$ of general type can be equipped with a complete hyperbolic metric $h$ that is compatible with the complex structure $c$. We then call $(\Si,h)$ a hyperbolic punctured surface.
Throughout, we adopt the convention that $\n$ and $\Si$ may have more than one (but finitely many) components.
\end{defn}

The Deligne-Mumford Compactness Theorem in the following form explains how degeneration can occur in the genus $\ga\geq 2$ case when $\ell(g_i)$ can decay to zero. Although the area of each surface is fixed (by Gauss-Bonnet) it can stretch out and become noncompact in the limit.
\begin{prop} {\rm (Deligne-Mumford compactness \cite{DM,Hu}.)}  \label{pro1.1}
Let $(M,g_{i},c_{i})$ be a sequence of closed hyperbolic Riemann surfaces of genus $\gamma\geq2$ which degenerate in the sense that there is no uniform positive lower bound on $\ell(g_i)$. Then, after selection of a subsequence,
$(M,g_{i},c_{i})$ converges (in a sense to be clarified) to a hyperbolic punctured Riemann surface $(\Sigma,h,c)$, where $\Si$ arises as follows:
There exists $\mathscr{E}=\{\sigma^{j}, j=1,...,k\}$, a collection of $k$ pairwise disjoint, homotopically nontrivial, simple closed curves on $M$ so that if
$\widetilde{M}$ is the surface obtained from $M$ by pinching all curves $\sigma^{j}$ to points $q^{j}$, the surface $\Si$
is defined to be $\widetilde{M}\setminus \cup_{j=1}^{k}q^{j}$.

The convergence above is to be understood as follows:
For each $i$ there exists a collection $\mathscr{E}_{i}=\{\sigma^{j}_{i}, j=1,...,k\}$ of
pairwise disjoint simple closed geodesics on $(M,g_{i},c_{i})$
with each $\si_i^j$ homotopic to $\si^j$,
and a continuous map $\tau_{i}: M \rightarrow \widetilde{M}$ with $\tau_{i}(\sigma_{i}^{j})=q^{j}$, $j\in \{1,...,k\}$ such that:

\begin{itemize}
\item[(i)] For each $j\in \{1,...,k\}$, the lengths $\ell(\sigma_{i}^{j})=:\ell_{i}^{j} \rightarrow 0$ as $i \rightarrow \infty$ .

\item[(ii)] For each $i$, $\tau_{i}: M\setminus \cup_{j=1}^k\sigma_{i}^{j} \rightarrow \Sigma$ is a diffeomorphism and its inverse is denoted by $f_i: \Sigma \rightarrow M\setminus \cup_{j=1}^k\sigma_{i}^{j} $.

\item[(iii)] $(f_i)^{*}g_{i} \rightarrow h $ in $C_{loc}^{\infty}$ on $\Sigma$.

\item[(iv)] $(f_i)^{*}c_{i} \rightarrow c $ in $C_{loc}^{\infty}$ on $\Sigma$.
\end{itemize}
\end{prop}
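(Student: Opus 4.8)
The plan is to prove this classical compactness result by combining the thick--thin decomposition of hyperbolic surfaces with a Cheeger--Gromov-type compactness argument on the thick part. First I would fix a universal Margulis constant $\eps_0>0$ for which, for every $i$, the $\eps_0$-thin part of $(M,g_i)$ is a disjoint union of standard collars $\cc(\ell)$ around simple closed geodesics; this is the content of the Collar Lemma \ref{Collar-lemma}. Because these geodesics are disjoint and pairwise non-homotopic on a surface of genus $\ga$, their number never exceeds $3\ga-3$, the size of a pants decomposition. Passing to a subsequence I may therefore assume both this number and the isotopy classes of the geodesics are independent of $i$, and that exactly $k\geq1$ of the associated lengths tend to zero. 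Labelling these geodesics $\si_i^j$, with common isotopy class $\si^j$, produces the collection $\mathscr{E}$; pinching the $\si^j$ yields the (possibly disconnected) surface $\widetilde M$ and the punctured surface $\Si=\widetilde M\setminus\cup_{j}q^{j}$.

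Next I would establish convergence on the thick part. Any compact $K\subset\Si$ sits, for all large $i$, inside the $\de$-thick part of $(M,g_i)$ for some $\de=\de(K)>0$, where the injectivity radius is bounded below while the metric is hyperbolic, so the curvature and all of its covariant derivatives are universally controlled. Working in geodesic normal (or harmonic) coordinates and applying Arzel\`a--Ascoli along a compact exhaustion of $\Si$, a diagonal subsequence yields maps onto the thick parts for which the pulled-back metrics converge in $C^\infty_{loc}$ to a smooth hyperbolic metric $h$ on $\Si$.

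The heart of the argument, and the step I expect to be the main obstacle, is the analysis of the degenerating collars and the assembly of the maps $f_i$ so that the convergence is genuinely $C^\infty_{loc}$ right up to the punctures. Using the explicit collar metric $\rho^2(ds^2+d\th^2)$, whose half-width $X(\ell)\to\infty$ as $\ell\to0$, I would show that each half of the collar around $\si_i^j$ converges, after the natural reparametrisation, to a hyperbolic cusp; hence $h$ extends to a complete, finite-area hyperbolic metric on $\Si$ with a cusp at every puncture $q^j$, and Gauss--Bonnet confirms that no area escapes in the limit. Matching the collar coordinates to the thick-part coordinates on their overlap produces the global diffeomorphisms $f_i:\Si\to M\setminus\cup_j\si_i^j$ required in item (ii), with $f_i^*g_i\to h$ in $C^\infty_{loc}$, which is (iii); the length condition (i) was already secured in the first step. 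Finally, since a hyperbolic metric determines its conformal class and the metrics converge smoothly locally, the induced complex structures satisfy $f_i^*c_i\to c$ in $C^\infty_{loc}$, which is (iv) and exhibits $(\Si,h,c)$ as a hyperbolic punctured Riemann surface.
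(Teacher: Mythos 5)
The paper itself contains no proof of Proposition \ref{pro1.1}: it is stated as background and attributed to \cite{DM,Hu}. Judged on its own merits, your outline --- thick--thin decomposition via the Collar Lemma \ref{Collar-lemma}, the bound $3\ga-3$ on the number of disjoint short geodesics, smooth compactness on the thick parts, collar-to-cusp convergence, and assembly of the diffeomorphisms $f_i$ --- is the standard differential-geometric route, essentially the proof in Hummel's book.

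There is, however, a genuine gap, and it sits at the one truly delicate point of the statement: your claim that ``passing to a subsequence I may assume \dots the isotopy classes of the geodesics are independent of $i$.'' This is not a pigeonhole statement, since essential simple closed curves on $M$ fall into countably infinitely many isotopy classes, and in fact no argument can rescue this step for the metrics $g_i$ as given. Concretely: let $g_i'$ pinch a fixed non-separating geodesic $\si$ while all other closed geodesics stay long (arrange this via Fenchel--Nielsen coordinates), let $T$ be the Dehn twist about a curve $\al$ with $i(\al,\si)=1$, and set $g_i:=(T^i)^*g_i'$. Then $\ell(g_i)\to 0$, but for large $i$ the unique short $g_i$-geodesic is freely homotopic to $T^{-i}(\si)$, and these classes are pairwise distinct because $i(T^{-i}\si,T^{-j}\si)=|i-j|\,i(\al,\si)^2>0$ for $i\neq j$. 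Since $k\geq 1$ and condition (i) forces every curve in $\mathscr{E}_i$ to have length tending to zero, $\mathscr{E}_i$ would have to consist of exactly this geodesic, whose homotopy class escapes every fixed one; so no subsequence works. The fixed-homotopy-class clause is only meaningful modulo diffeomorphisms of $M$: the mapping-class ambiguity must be absorbed into the maps $f_i$ (equivalently, one proves the statement for $\psi_i^*g_i$ for suitable diffeomorphisms $\psi_i:M\to M$). This is exactly how Hummel's formulation via ``deformations'' $\tau_i:M\to\widetilde M$ proceeds --- the pinching curves are read off a posteriori as $\tau_i^{-1}(q^j)$, never fixed in advance --- and it is the only form in which the present paper ever uses the proposition (only the $f_i$, the collars and the lengths $\ell_i^j$ enter). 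Your proof should therefore run in the opposite order: construct $(\Si,h)$ and the $f_i$ from the thick parts first, and only then define the curves and their classes through the maps $f_i$. Relatedly, your thick-part step is circular as written --- you measure compact sets $K\subset\Si$ against the $\de$-thick parts of $(M,g_i)$ before $\Si$ or the $f_i$ exist; taking pointed Cheeger--Gromov limits of the thick parts (curvature $\equiv -1$ and injectivity radius $\geq\de$ give uniform control) and identifying the limit afterwards by your collar-to-cusp analysis repairs this.
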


By hyperbolic surface theory, the number of simple closed geodesics of length $< {2\arsinh(1)}$ for a closed hyperbolic surface of genus $\gamma\geq 2$ is bounded by $3\gamma-3$ (cf. \cite{Hu}). Therefore, in Proposition
\ref{pro1.1}, we have $1 \leq k \leq 3\gamma-3$.

More generally, we have:

\begin{prop}(\cite{Hu} Lemma 4.1.) \label{Thick-thin-1}    
Let $(\Sigma,h)$ be a  hyperbolic punctured surface of genus $\gamma$ and with $K$ punctures. Then the simple closed geodesics in $\Sigma$ of lengths smaller than ${2\arsinh(1)}$ are pairwise disjoint. In particular, there are only finitely many of them and their number is bounded by $3\gamma-3+K$.
\end{prop}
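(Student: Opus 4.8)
The plan is to reduce everything to the Collar Lemma \ref{Collar-lemma}. The threshold $2\arsinh(1)$ is chosen precisely so that the embedded collar around any simple closed geodesic $\ga$ of length $\ell<2\arsinh(1)$ has half-width exceeding $\arsinh(1)$: writing the collar as $\{p:\dist(p,\ga)<w(\ell)\}$ with $\sinh(w(\ell))=1/\sinh(\ell/2)$, the hypothesis $\ell/2<\arsinh(1)$ gives $\sinh(\ell/2)<1$ and hence $w(\ell)=\arsinh\bigl(1/\sinh(\ell/2)\bigr)>\arsinh(1)$. All three assertions then follow, the first quantitatively and the remaining two topologically.

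First I would establish pairwise disjointness. Suppose two \emph{distinct} simple closed geodesics $\al,\be$ on $(\Si,h)$ both have length $<2\arsinh(1)$ and meet at a point $p$. Distinct geodesics are transverse, so $\be$ crosses $\al$ at $p$. Since $\be\neq\al$ and the only simple closed geodesic contained in the embedded collar of $\al$ is its core, $\be$ cannot remain inside that collar; the sub-arc of $\be$ lying in the collar and containing $p$ therefore runs from one boundary component, through $\al$, to the other. Because $\dist(\cdot,\al)$ is $1$-Lipschitz, this arc has length at least $2w(\ell_\al)$, whence
\[
\ell(\be)\geq 2w(\ell_\al)>2\arsinh(1),
\]
contradicting $\ell(\be)<2\arsinh(1)$. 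Hence no two such geodesics intersect.

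Next I would bound their number. Distinct simple closed geodesics lie in distinct free homotopy classes, as each essential, non-peripheral class on a hyperbolic surface carries a unique geodesic; moreover every simple closed geodesic is automatically essential and non-peripheral, the trivial and cusp-encircling classes having no geodesic representative. Thus the collection of all simple closed geodesics of length $<2\arsinh(1)$ forms a system of pairwise disjoint, pairwise non-homotopic, essential simple closed curves on $\Si$. A surface of genus $\ga$ with $K$ punctures admits at most $3\ga-3+K$ such curves, this being the number appearing in a pants decomposition; hence there are at most $3\ga-3+K$ of them, and in particular only finitely many.

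I expect the only genuine work to be the crossing-length estimate in the disjointness step, namely making precise that a single transverse intersection forces $\be$ to traverse the full collar of $\al$ and so contributes length $\geq 2w(\ell_\al)$ to $\be$; the remaining ingredients (the explicit collar half-width, the uniqueness of the geodesic in each homotopy class, and the pants-decomposition count) are standard facts of hyperbolic-surface theory that may simply be quoted. The finiteness claim is then immediate from the counting bound.
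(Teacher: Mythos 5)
Your proposal is correct, but be aware that there is nothing in the paper to compare it against: the proposition is quoted from the literature (\cite{Hu}, Lemma 4.1) and the paper supplies no proof of its own. What you have written is the standard collar-based argument, and it is sound: the threshold computation $w(\ell)=\arsinh\bigl(1/\sinh(\ell/2)\bigr)>\arsinh(1)$ for $\ell<2\arsinh(1)$ is consistent with the paper's Collar Lemma \ref{Collar-lemma} (integrating $\rho$ from $s=0$ to $s=X(\ell)$ gives exactly $\log\frac{1+\cosh(\ell/2)}{\sinh(\ell/2)}=\arsinh(1/\sinh(\ell/2))$), and the counting step correctly combines uniqueness of the geodesic representative in each free homotopy class, the fact that closed geodesics on a complete hyperbolic surface are essential and non-peripheral, and the pants-decomposition bound $3\ga-3+K$.

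One imprecision in the disjointness step deserves fixing, though it does not affect the conclusion. You assert that the sub-arc of $\be$ through $p$ inside the collar of $\al$ ``runs from one boundary component, through $\al$, to the other''; in fact $\be$ may re-cross $\al$ and exit through the \emph{same} boundary component twice. The length bound survives without this claim: parametrise $\be:[0,L]\to\Si$ with $\be(0)=\be(L)=p$, let $t_1$ be the first time $\be$ leaves the collar and $t_2$ the last time it is outside the collar (both exist since $\be$ is not contained in the collar, and $t_1\leq t_2$). Since $\dist(\cdot,\al)$ is $1$-Lipschitz, the arc $\be|_{[0,t_1]}$ has length at least $w(\ell_\al)$ and so does $\be|_{[t_2,L]}$; these arcs are disjoint, whence $\ell(\be)=L\geq 2w(\ell_\al)>2\arsinh(1)$, the desired contradiction. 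With that repair your argument is complete.
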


\subsection{Description of the thin parts of the surface: Collars and Cusps}
\label{thinsect}

Let $(\m,h)$ be any smooth Riemannian manifold. We denote by
$\inj(p)=\inj_{(\m,h)}(p)$
the injectivity radius of $(\m,h)$ at $p\in\m$ and by $\m^\de$ the $\de$-thick part
$$\m^\de:=\{p\in \m:\, \inj(p)\geq \de\},\quad \de>0.$$
The $\de$-thin part of $\m$, sometimes denoted by $\de\text{-thin}(\m)$,
is then the open set $\m\setminus \m^\de$ of points with injectivity radius strictly less than $\de$.

One fundamental fact of hyperbolic surface theory
-- see Proposition \ref{Thick-thin-2} below --
is that for any $0 <\delta < {\rm arsinh(1)}$, the $\delta$-thin part
of a hyperbolic surface is given by a finite union of hyperbolic cylinders of finite length around closed geodesics,
and of cylinders of infinite length that we call standard cusps.
The regions near simple closed geodesics are described by the Collar Lemma:

\begin{lemma} (Keen-Randol Collar Lemma, {\rm \cite{R}}.)
\label{Collar-lemma}
Let $(M, g)$ be a closed hyperbolic surface and let $\sigma$
be a simple closed geodesic of length $\ell$. Then there is a neighbourhood $U$ around $\sigma$, a so called collar, which is isometric to the cylinder
$$ \cc(\ell)=(-X(\ell), X(\ell)) \times S^1$$ equipped with the hyperbolic metric
$\rho^2(ds^2+d\theta^2)$, where $$X(\ell)= \frac{2\pi}{\ell}\left(\frac{\pi}{2}-\arctan\left(\sinh\left(\frac{\ell}{2}\right)\right) \right),  \qquad \rho=\frac{\ell}{2\pi \cos(\frac{\ell s}{2\pi})}.$$
The geodesic $\sigma$ corresponds to the circle 
$\{0\}\times S^1\subset \cc(\ell) $.
\end{lemma}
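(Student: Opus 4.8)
The plan is to pass to the universal cover, realise the collar as a neighbourhood of a single lifted geodesic, and then separate the statement into (a) an explicit computation of the induced metric in conformal coordinates and (b) a proof that the resulting cylinder embeds isometrically into $M$ up to the stated width; step (b) is the geometric heart. Write $M=\Hyp/\Gamma$ with $\Hyp$ the hyperbolic plane and $\Gamma\subset\PSL(2,\R)$ discrete, torsion-free and cocompact. The free homotopy class of $\sigma$ determines a conjugacy class of a primitive hyperbolic element $A\in\Gamma$ whose axis $\tilde\sigma\subset\Hyp$ is a lift of $\sigma$ and whose translation length is $\ell$. Normalising $\tilde\sigma$ to be the imaginary axis of the upper half-plane, $A$ acts by $z\mapsto e^{\ell}z$, and the intermediate cover $\Hyp/\langle A\rangle$ is a complete hyperbolic cylinder with core geodesic of length $\ell$. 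The claimed collar $\cc(\ell)$ will be a metric neighbourhood of this core, and the two tasks are to put it in the stated conformal form and to show it maps injectively to $M$.

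First I would compute the metric on $\Hyp/\langle A\rangle$ in Fermi (geodesic normal) coordinates $(r,t)$ about the core, where $r$ is signed distance to the core and $t\in\R/\ell\Z$ is arclength along it. A standard Jacobi-field calculation gives the metric $dr^{2}+\cosh^{2}(r)\,dt^{2}$. Rescaling $\theta:=\tfrac{2\pi}{\ell}t\in S^{1}$ and introducing $s$ through the Gudermannian substitution $s:=\tfrac{2\pi}{\ell}\arctan(\sinh r)$, so that $\sinh r=\tan(\tfrac{\ell s}{2\pi})$, $\cosh r=\tfrac{1}{\cos(\ell s/2\pi)}$ and $ds=\tfrac{2\pi}{\ell}\tfrac{dr}{\cosh r}$, converts this into the conformal form $\rho^{2}(ds^{2}+d\theta^{2})$ with $\rho=\tfrac{\ell}{2\pi}\cosh r=\tfrac{\ell}{2\pi\cos(\ell s/2\pi)}$, exactly as claimed, and places the core geodesic at $\{0\}\times S^{1}$. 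This step is a routine change of variables.

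The main obstacle is (b): showing that the tube around the core of half-width $w$, with $\sinh w=1/\sinh(\ell/2)$, embeds. Equivalently, the distance-$w$ neighbourhood of $\tilde\sigma$ in $\Hyp$ must map injectively to $M$, i.e. for every $B\in\Gamma\setminus\langle A\rangle$ the geodesic $B\tilde\sigma$ (the axis of $BAB^{-1}$, itself a lift of $\sigma$) must stay at distance $\geq 2w$ from $\tilde\sigma$. Here simplicity of $\sigma$ enters decisively: if two distinct lifts of $\sigma$ were to cross in $\Hyp$, their projections would force a transverse self-intersection of $\sigma$, so all distinct lifts are pairwise disjoint geodesics. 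The quantitative separation of two disjoint axes of hyperbolic elements of translation length $\ell$ in a discrete torsion-free group then follows from a J\o rgensen/Shimizu-type discreteness inequality, equivalently a hyperbolic-trigonometry estimate along the common perpendicular, which yields a bound of the form $\sinh(\ell/2)\sinh(d/2)\geq 1$ on the perpendicular distance $d$ between the two axes; since $\sinh w=1/\sinh(\ell/2)$ this is precisely $d\geq 2w$. Establishing the sharp constant in this inequality, which is what pins down the exact collar width, is the delicate point of the whole argument.

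Finally I would convert the width $w$ back into the $s$-coordinate: from $s=\tfrac{2\pi}{\ell}\arctan(\sinh r)$ and $\sinh w=1/\sinh(\ell/2)$, together with the identity $\arctan(1/x)=\tfrac{\pi}{2}-\arctan x$ for $x>0$, the level $r=w$ corresponds to $s=X(\ell)=\tfrac{2\pi}{\ell}\bigl(\tfrac{\pi}{2}-\arctan(\sinh(\ell/2))\bigr)$. Hence the embedded collar is exactly $(-X(\ell),X(\ell))\times S^{1}$ equipped with $\rho^{2}(ds^{2}+d\theta^{2})$ and with $\sigma$ sitting at $\{0\}\times S^{1}$, which completes the proof.
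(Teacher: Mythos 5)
The paper never proves Lemma \ref{Collar-lemma}: it is quoted as a classical result with a citation to Randol \cite{R}, so there is no internal argument to compare yours against. Your sketch is, in outline, the classical proof, and most of it is correct and complete where it needs to be: the Fermi-coordinate computation $dr^2+\cosh^2(r)\,dt^2$, the change of variables $s=\tfrac{2\pi}{\ell}\arctan(\sinh r)$ producing exactly $\rho^2(ds^2+d\theta^2)$ with $\rho=\tfrac{\ell}{2\pi\cos(\ell s/2\pi)}$, the reduction of embeddedness of the half-width-$w$ tube (where $\sinh w=1/\sinh(\ell/2)$) to the separation statement $d(\tilde\sigma,B\tilde\sigma)\geq 2w$ for all $B\in\Gamma\setminus\langle A\rangle$, the use of simplicity to conclude that distinct lifts are pairwise disjoint, and the endpoint conversion $r=w\leftrightarrow s=X(\ell)$ all check out.

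The one genuine gap is the step you yourself flag as delicate, and the tools you name there would not close it. Shimizu--Leutbecher concerns parabolic elements fixing a cusp, which do not exist in a cocompact group, and J{\o}rgensen's inequality applied to $A$ and $BAB^{-1}$ only gives $4\sinh^2(\ell/2)+\abs{\tr[A,BAB^{-1}]-2}\geq 1$, which has neither the sharp constant nor the right shape; no such general discreteness inequality yields the exact collar width. What actually proves $\sinh(\ell/2)\sinh(d/2)\geq 1$ is elementary hyperbolic trigonometry combined with torsion-freeness: let $P$ be the common perpendicular between the disjoint axes $\tilde\sigma$ and $B\tilde\sigma$, and let $Q$, $Q'$ be the geodesics perpendicular to $\tilde\sigma$, resp.\ $B\tilde\sigma$, at distance $\ell/2$ from the feet of $P$, chosen on the same side of (the extension of) $P$. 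Writing the translations as products of reflections, $A=R_QR_P$ and $(BAB^{-1})^{\pm1}=R_PR_{Q'}$, one of the elements $A(BAB^{-1})^{\pm1}\in\Gamma$ equals $R_QR_{Q'}$. By the right-angled hexagon relation, $Q$ and $Q'$ admit a common perpendicular of length $m$ with $\cosh m=\sinh^2(\ell/2)\cosh d-\cosh^2(\ell/2)$ precisely when $\sinh(\ell/2)\sinh(d/2)>1$; if $\sinh(\ell/2)\sinh(d/2)<1$ they intersect, making $R_QR_{Q'}$ a nontrivial elliptic element (impossible in a discrete torsion-free group), and at equality they are asymptotic, making it parabolic (impossible in a cocompact group). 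Hence $d>2w$ for every $B\in\Gamma\setminus\langle A\rangle$, which is the separation you need. With this replacement -- which is in substance Randol's own argument in \cite{R} -- your sketch becomes a complete proof.
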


Owing to this explicit description of the metric in these collar regions, we can read off the $\de\thin$ part:

\begin{lemma} (\cite{Hu, Zh}.)
\label{lem2.3} Let $(\cc(\ell),\rho^2(ds^2+d\th^2))$, $0<\ell\leq 2 \arsinh(1)$ be a collar region of a hyperbolic surface $(M,g)$ as described in the Collar Lemma \ref{Collar-lemma}. Then the
injectivity radius of $(M,g)$ at points in the collar is characterised by
\bee  \sinh({\rm inj}(s,\theta))\cdot \cos\left(\frac{\ell s}{2\pi}\right) =  \sinh\left(\frac{\ell}{2}\right). \nn  \eee
In particular, given any such $\ell$ and any $0<\de<\arsinh(1)$ 
the $\de$-thin part of the collar  
is given by the subcylinder
\beq \cc(\ell,\de):=(-X_\de(\ell), X_\de(\ell)) \times S^1 \subseteq \cc(\ell),    \eeq
where $X_\de(\ell)=  \frac{2\pi}{\ell}\left(\frac{\pi}{2}-\arcsin \left(\frac{\sinh(\frac{\ell}{2})}{\sinh \delta}\right) \right)$ for $\de\geq \ell/
2$, respectively zero for smaller values of $\delta$, and its area is bounded by
$${\rm Area}(\cc(\ell,\de))\leq \frac{2\ell}{\sinh (\frac{\ell}{2})}\cdot \sinh \delta \leq C\cdot \delta$$
for a uniform constant $C$ independent of $0<\ell\leq 2\arsinh(1)$ and $0<\de<\arsinh(1)$.
\end{lemma}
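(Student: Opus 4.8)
The plan is to reduce everything to explicit hyperbolic trigonometry in the model cylinder $\Hyp/\langle z\mapsto e^\ell z\rangle$, of which the collar $\cc(\ell)$ of the Collar Lemma \ref{Collar-lemma} is an isometrically embedded piece. First I would record the hyperbolic distance $r=r(s)$ from a point $(s,\theta)\in\cc(\ell)$ to the core geodesic $\{0\}\times S^1$, obtained by integrating the conformal factor in the $s$-direction:
$$r(s)=\int_0^{s}\rho(s')\,ds'=\int_0^{s}\frac{\ell}{2\pi\cos(\ell s'/2\pi)}\,ds'=\log\bigl(\sec\tfrac{\ell s}{2\pi}+\tan\tfrac{\ell s}{2\pi}\bigr).$$
Writing $\alpha=\ell s/2\pi$ and using $(\sec\alpha+\tan\alpha)(\sec\alpha-\tan\alpha)=1$, so that $e^{\pm r}=\sec\alpha\pm\tan\alpha$, this collapses to the clean identity $\cosh r=\sec(\ell s/2\pi)=1/\cos(\ell s/2\pi)$.

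Next I would compute the injectivity radius. As the curvature equals $-1$ there are no conjugate points, so $\inj(p)$ is exactly half the length of the shortest noncontractible geodesic loop based at $p$; lifting to $\Hyp$, with the core geodesic placed along the imaginary axis and the deck group generated by $\gamma(z)=e^\ell z$, this is $\tfrac12\min_{n\neq0}\dist(\tilde p,\gamma^n\tilde p)$. Choosing the lift $\tilde p=e^{i\theta_0}$ at distance $r$ from the imaginary axis, so that $\cosh r=1/\sin\theta_0$, the standard two-point distance formula in the upper half-plane yields
$$\cosh\dist(\tilde p,\gamma^n\tilde p)=1+2\sinh^2(n\ell/2)\cosh^2 r,$$
which is strictly increasing in $|n|$. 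Hence the shortest loop is the primitive one ($|n|=1$), and combining with $\cosh(2\inj)=1+2\sinh^2(\inj)$ gives $\sinh(\inj(s,\theta))=\sinh(\ell/2)\cosh r=\sinh(\ell/2)/\cos(\ell s/2\pi)$; multiplying through by $\cos(\ell s/2\pi)$ is precisely the first assertion.

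The description of the $\de$-thin part then drops out of the inequality $\inj(s,\theta)<\de$, equivalently $\cos(\ell s/2\pi)>\sinh(\ell/2)/\sinh\de$: when $\sinh\de\le\sinh(\ell/2)$, i.e. $\de\le\ell/2$, the right-hand side is at least $1$ and there are no solutions (the thin part is empty, matching the ``zero for smaller values of $\de$'' clause), while otherwise the inequality holds exactly for $|s|<\tfrac{2\pi}{\ell}\arccos(\sinh(\ell/2)/\sinh\de)=X_\de(\ell)$, using $\arccos=\tfrac\pi2-\arcsin$. For the area I would integrate the volume form $\rho^2\,ds\,d\theta$ over this subcylinder; with $\int\sec^2(\ell s/2\pi)\,ds=\tfrac{2\pi}{\ell}\tan(\ell s/2\pi)$ the area equals $2\ell\tan(\ell X_\de/2\pi)=2\ell\cot\beta$ where $\beta=\arcsin(\sinh(\ell/2)/\sinh\de)$, and since $\cot\beta=\sqrt{\sinh^2\de-\sinh^2(\ell/2)}/\sinh(\ell/2)\le\sinh\de/\sinh(\ell/2)$ this gives the stated bound $2\ell\sinh\de/\sinh(\ell/2)$; the final $\le C\de$ follows from $\ell/\sinh(\ell/2)\le2$ together with $\sinh\de\le C\de$ on $0<\de<\arsinh(1)$.

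The hard part will be the injectivity-radius step, specifically justifying two points: that within the collar the injectivity radius of the ambient surface $(M,g)$ coincides with that of the model cylinder, so that no competing short loop leaves the collar; and that among the loops represented by $\gamma^n$ the primitive class $|n|=1$ is shortest. The latter is immediate from the monotonicity in $|n|$ displayed above, while the former is exactly the standard embedded, disjoint-collar structure underlying the Collar Lemma \ref{Collar-lemma} (valid because we restrict to $\de<\arsinh(1)$, where the thin part is a disjoint union of such collars). Everything else is elementary integration and trigonometric simplification.
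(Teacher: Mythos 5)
Your computations inside the model cylinder are correct, and in several places more detailed than the paper's own proof: the paper disposes of the injectivity radius formula and the description of the $\de$-thin part with a citation (``follows from arguments of hyperbolic geometry, see \cite{Hu}'') and only writes out the area estimate, which you compute in exactly the same way, namely ${\rm Area}=2\ell\tan\bigl(\ell X_\de/2\pi\bigr)\le 2\ell\sinh\de/\sinh(\ell/2)\le C\de$. Your distance identity $\cosh r=1/\cos(\ell s/2\pi)$, the displacement formula $\cosh\dist(\tilde p,\gamma^n\tilde p)=1+2\sinh^2(n\ell/2)\cosh^2 r$, and the monotonicity in $\abs{n}$ are all right, and they correctly give $\sinh(\inj_{\mathrm{cyl}})=\sinh(\ell/2)\cosh r$ for the complete cylinder $\Hyp/\langle z\mapsto e^\ell z\rangle$.

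The genuine gap is the step you yourself flag as ``the hard part'': that the injectivity radius of the ambient surface $(M,g)$ agrees with $\inj_{\mathrm{cyl}}$ on the collar, and your proposed justification does not close it. The Collar Lemma \ref{Collar-lemma} gives only that $\cc(\ell)$ embeds isometrically in $(M,g)$; this yields $\inj_M\le\inj_{\mathrm{cyl}}$ (collar loops are loops in $M$) but says nothing about geodesic loops based in the collar that exit it and return, i.e.\ about deck transformations in $\pi_1(M)$ that are not powers of the core element. Appealing to the thick-thin decomposition of Proposition \ref{Thick-thin-2} is circular: that statement is proved in \cite{Hu} using precisely the injectivity radius formula you are trying to establish, and in the form quoted it presupposes knowing what the thin part of a collar is. Moreover, the blanket equality $\inj_M=\inj_{\mathrm{cyl}}$ on \emph{all} of $\cc(\ell)$, which is what your argument would deliver, is in general false: near the ends of the collar both quantities exceed $\arsinh(1)$, and if a second, shorter geodesic $\si'$ has its collar abutting $\cc(\ell)$, the loop around $\si'$ based at such a point has length about $2\arsinh(\cosh(\ell(\si')/2))$, which undercuts the value $2\arsinh(\cosh(\ell/2))$ predicted by your formula when $\ell(\si')<\ell$. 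So the equality holds, and is needed, only on the region where the cylinder formula returns a value below $\arsinh(1)$, which is exactly what suffices for the $\de$-thin part claim with $\de<\arsinh(1)$. What is missing is the hyperbolic-geometry input that every essential geodesic loop of length $<2\arsinh(1)$ based at a point of the collar is homotopic to a power of the core geodesic; you should either cite this (\cite{Hu}, Prop.~IV.4.2, as the paper does) or prove it, e.g.\ by showing that any deck transformation which is not a power of the core displaces lifts of such points by at least $2\arsinh(1)$.
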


\begin{proof}
The formula for the injectivity radius and thus for the $\de$-thin part of the collar follows from arguments of hyperbolic geometry, see \cite{Hu}.
The area of $\cc(\ell,\de)$ is then given by
\beqa  {\rm Area} (\cc(\ell,\delta)) &= \int_{\cc(\ell,\delta)} \rho^2 ds d \theta =\frac{\ell^2}{2\pi} \int_{-X_\de}^{X_\de}\cos^{-2}\left(\frac{\ell s}{2\pi}\right) ds\\
&=2  \ell\tan\left(\frac{\ell X_\de}{2\pi}\right)\leq 2 \frac{\ell\cdot \sinh(\delta)}{{\rm sinh} (\frac{\ell}{2})}\leq C\cdot \de
\eeqa
for $0<\de<\arsinh(1)$ as claimed.
\end{proof}

The Collar Lemma also applies to compact subsets of  hyperbolic punctured surfaces $(\Si,h)$ and thus gives that for
$0 <\delta < {\rm ar sinh(1)}$ the $\de$-thin part of $(\Sigma,h)$ consists of (a possibly empty set of)
collars as well as the $\de$-thin parts of the surface contained in neighbourhoods of the punctures.
Near a puncture, $(\Si,h)$ has the form of a so-called standard cusp, leading to the following general description of the thin part of a hyperbolic punctured surface.

\begin{prop} (\cite{Hu} Proposition IV.4.2.) \label{Thick-thin-2} Let $(\Sigma,h)$ be a hyperbolic punctured surface with punctures $\{p^1,...,p^K\}$.
Then the ${\rm ar sinh(1)}$-thin part of $(\Sigma,h)$ is given as 
the union of  mutually disjoint sets consisting of
\begin{enumerate}
\item the ${\rm ar sinh(1)}$-thin parts of the collar neighbourhoods 
around simple closed geodesics $\si$ of length $\ell=\ell(\si) <2 {\rm ar sinh(1)}$, and 
\item neighbourhoods $U(p^j)$ around each of the punctures, $j=1\ldots K$, which are all isometric to a standard cusp,
i.e. to the infinitely long half-cylinder $(\pi,\infty)\times S^1$ equipped with the metric $\frac{1}{s^2}(ds^2+d\theta^2)$  or equivalently to the punctured open disc
$D_{e^{-\pi}}(0)\setminus\{0\}$ equipped with the metric $\frac1{|z|^2\cdot(\log(\abs{z}))^2} dz^2$.
\end{enumerate}
\end{prop}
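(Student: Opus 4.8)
The plan is to pass to the universal cover and reduce the statement to the discreteness of the holonomy group together with the explicit Margulis-type threshold in dimension two. By the uniformization invoked in the definition above, write each component of $(\Si,h)$ as a quotient $\Hyp/\Ga$ of the hyperbolic plane by a torsion-free discrete group $\Ga\subset\PSL(2,\R)$. For $p\in\Si$ with a chosen lift $\tilde p\in\Hyp$ one has $2\inj(p)=\min_{g\in\Ga\setminus\{e\}}d_{\Hyp}(\tilde p,g\tilde p)$, since hyperbolic space has no conjugate points; thus $p$ lies in the $\arsinh(1)$-thin part exactly when some nontrivial $g\in\Ga$ displaces $\tilde p$ by less than $2\arsinh(1)$.

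First I would show that the subgroup $\Ga_{\tilde p}\subseteq\Ga$ generated by all such short-displacement elements is infinite cyclic. This is where the value $\arsinh(1)$ is decisive: it is tuned precisely so that the Zassenhaus--Margulis commutator estimate closes. Using the standard displacement identity, conveniently phrased through $\sinh(\tfrac12 d(\tilde p,g\tilde p))$, one checks that if two nontrivial $g,h$ each move $\tilde p$ by less than $2\arsinh(1)$ and fail to commute, then $[g,h]$ moves $\tilde p$ by a strictly smaller amount; iterating then produces infinitely many distinct elements accumulating at the identity, contradicting discreteness of $\Ga$. Hence any two short elements commute, so $\Ga_{\tilde p}$ is abelian, and a nontrivial torsion-free abelian discrete subgroup of $\PSL(2,\R)$ is generated by a single primitive element $\ga_0$ (its members share an axis or a fixed point at infinity).

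Next I would classify $\ga_0$ and extract the two model geometries. A nontrivial element of a torsion-free Fuchsian group is hyperbolic or parabolic. If $\ga_0$ is hyperbolic with translation length $\ell$, its axis descends to a closed geodesic $\si$ of length $\ell$, the region $\{\tilde p:d_{\Hyp}(\tilde p,\ga_0\tilde p)<2\arsinh(1)\}$ is a symmetric tube about the axis, and its quotient is exactly the collar $\cc(\ell)$ of the Collar Lemma \ref{Collar-lemma}; the bound $\ell<2\arsinh(1)$ and the description of its thin part are then Lemma \ref{lem2.3}. If $\ga_0$ is parabolic, normalise it to $z\mapsto z+1$ fixing $\infty$ in the upper half-plane; the elementary relation $\cosh d_{\Hyp}(iy,1+iy)=1+\tfrac{1}{2y^2}$, equivalently $\sinh(\inj)=\tfrac{1}{2y}$, shows the thin part is the horoball $\{y>\tfrac12\}$, and the substitution $s=2\pi y$, $\theta=2\pi x$ identifies $\Hyp/\langle\ga_0\rangle$ with the half-cylinder $(\pi,\infty)\times S^1$ carrying the metric $s^{-2}(ds^2+d\th^2)$, i.e. the standard cusp; the punctured-disc model follows via $z=e^{-(s+i\th)}$.

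Finally I would assemble the global picture. Each thin point lies in the region attached to its own primitive $\ga_0$, so these collars and cusps cover the thin part; their mutual disjointness is again the commutator step of the second paragraph --- two distinct primitive short elements cannot both displace one $\tilde p$ by less than $2\arsinh(1)$ --- and matches the disjointness recorded in Proposition \ref{Thick-thin-1}, while finiteness of the collection follows from the bound on the number of short geodesics there. The main obstacle is the second paragraph: verifying that $\arsinh(1)$ is exactly the constant for which the commutator estimate forces $\Ga_{\tilde p}$ to be abelian (hence cyclic) and for which the tubes separate. Once that quantitative step is in hand, the remainder is bookkeeping on the two explicit quotient models and can be quoted directly from \cite{Hu}.
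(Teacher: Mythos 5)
Your strategy is the standard one, but note first that the paper itself offers no proof of this proposition at all: it is quoted verbatim from Hummel \cite{Hu} (Proposition IV.4.2), so the only meaningful comparison is with the classical argument in the literature, which indeed runs exactly along your lines (pass to $\Hyp/\Ga$, show the short-displacement elements at a point generate an infinite cyclic group, split into the hyperbolic and parabolic cases, and compute the tube and horoball models explicitly). Your third and fourth paragraphs are correct and complete: the displacement identities $\sinh(\tfrac12 d(\tilde p,g\tilde p))=\cosh(r)\sinh(\ell/2)$ and $\sinh(\inj)=\tfrac1{2y}$, the identification of the horoball $\{y>\tfrac12\}$ with $(\pi,\infty)\times S^1$ carrying $s^{-2}(ds^2+d\th^2)$ via $s=2\pi y$, $\th=2\pi x$, and the consistency with Lemmas \ref{Collar-lemma} and \ref{lem2.3} all check out.

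The genuine gap is precisely the step you flag as ``the main obstacle'': the claim that if $g,h$ both displace $\tilde p$ by less than $2\arsinh(1)$ and do not commute, then $[g,h]$ displaces $\tilde p$ by \emph{strictly less}, so that iterated commutators accumulate at the identity. That Zassenhaus--Margulis mechanism is not valid at this constant, and in fact cannot be made to work in terms of displacement alone: an element with small displacement at $\tilde p$ need not be close to the identity in $\PSL(2,\R)$ (a hyperbolic element with axis through $\tilde p$ and translation length just under $2\arsinh(1)$ is a definite distance from the identity), and the commutator-shrinking estimate requires closeness in the group, so it only yields \emph{some} unspecified Margulis constant, not the sharp one appearing in the statement. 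The correct quantitative input that pins down $\arsinh(1)$ is different: for any two elements $g,h$ of a discrete, torsion-free subgroup of $\PSL(2,\R)$ which do not lie in a common cyclic subgroup (equivalently, have no common axis or fixed point at infinity), one has the inequality $\sinh\bigl(\tfrac12 d_{\Hyp}(\tilde p,g\tilde p)\bigr)\cdot\sinh\bigl(\tfrac12 d_{\Hyp}(\tilde p,h\tilde p)\bigr)\geq 1$ for every $\tilde p\in\Hyp$ (see Buser, \emph{Geometry and Spectra of Compact Riemann Surfaces}, Chapter 4). With this inequality, two non-commuting elements cannot both have displacement below $2\arsinh(1)$ at the same point, which simultaneously gives that your $\Ga_{\tilde p}$ is cyclic \emph{and} that the resulting collars and cusp neighbourhoods are mutually disjoint; everything else in your proposal then goes through. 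As written, however, the proof rests on an unproved (and, as an iteration scheme, unprovable-at-this-constant) commutator estimate, so the central claim of the proposition --- that $\arsinh(1)$ is an admissible threshold --- is not established.
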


For a degenerating sequence of hyperbolic surfaces,
we now conclude:
\begin{lemma} \label{lemma:thick}
Suppose $(M,g_i)$ converges to a  hyperbolic punctured surface $(\Sigma,h)$ as in Proposition \ref{pro1.1}.
Then the following claims are true for any number $0<\delta<\arsinh(1)$:
\begin{enumerate}
 \item The $\de$-thick parts $\Si_i^\de$ of the surfaces $(\Si,f_i^*g_i)$ converge to the compact set $\Si^\de$, the $\de$-thick part of the limit surface,
both in the sense of Hausdorff distance on $(\Sigma,h)$ as well as in the sense that the measure of the symmetric difference $\Si_i^\de\Delta\Si^\de$ converges to zero.
(In particular, all the sets are contained in a uniform compact set.)
\item Given any sequence of functions $\varphi_i:\Si\to\R$ with $\norm{\varphi_i}_{L^1(\Si,f_i^*g_i)}$ bounded and converging
$$\varphi_i\to \varphi\quad \text{ in } L_{loc}^1(\Sigma,h),$$
the corresponding integrals over the $\de$-thick parts of the surfaces converge
$$\int_{\Si_i^\de}\varphi_i d\mu_{f_i^*g_i}\to \int_{\Si^\de}\varphi d\mu_h.$$
\end{enumerate}
\end{lemma}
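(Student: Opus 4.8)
The plan is to regard Part 1 as the substance of the lemma and to obtain Part 2 from it by a routine measure-theoretic splitting. For Part 1 the strategy is to lean on the explicit description of thin parts supplied by the Collar Lemma \ref{Collar-lemma} and Lemma \ref{lem2.3}, rather than to analyse the globally defined injectivity radius head-on.

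First I would fix $0<\delta<\arsinh(1)$ and recall from Proposition \ref{Thick-thin-2} that the $\delta$-thin part of $(\Sigma,h)$ is a disjoint union of the $\delta$-thin parts of the finitely many collars around geodesics of length $<2\arsinh(1)$ together with one standard-cusp neighbourhood of each puncture; in particular $\Sigma^\delta$ is compact. On each $(\Sigma,f_i^*g_i)$ the $\delta$-thin part is again a union of $\delta$-thin collar parts, now including the $k$ degenerating collars $\cc(\ell_i^j)$. The geometric input I would use is the standard fact that, under $f_i$, the degenerating collars converge to the standard cusps: since by Lemma \ref{lem2.3} the injectivity radius increases monotonically from $\ell_i^j/2$ at the core out to the ends, and $X_\delta(\ell_i^j)\approx\frac{\pi^2}{\ell_i^j}-\frac{\pi}{\delta}\to\infty$, a fixed punctured-disc neighbourhood of each $p^j$ lies in the $\delta$-thin part of $(\Sigma,f_i^*g_i)$ for all large $i$. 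This confines every $\Sigma_i^\delta$ to a single compact subset $K\subset\Sigma$ bounded away from the punctures, giving the ``uniform compact set'' assertion.

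On $K$ the remaining task is the uniform convergence $\inj_{(\Sigma,f_i^*g_i)}\to\inj_{(\Sigma,h)}$, which I would deduce from the smooth local convergence $f_i^*g_i\to h$: in nonpositive curvature the injectivity radius at a point is half the length of the shortest geodesic loop based there, and for points in the $\delta$-thick range these competitors have uniformly bounded length and stay in a compact region, so an $o(1)$ perturbation of the metric perturbs $\inj$ by $o(1)$ (the only short loops around the collapsing collars survive on the $\Sigma$ side, so the intrinsic radius agrees with the one computed in $(M,g_i)$). Uniform convergence of $\inj$ on $K$ gives Hausdorff convergence $\Sigma_i^\delta\to\Sigma^\delta$, and since the disagreement set satisfies $\Sigma_i^\delta\triangle\Sigma^\delta\subset\{|\inj_h-\delta|\le\epsilon\}$ for large $i$, letting $\epsilon\to0$ and using that the level set $\{\inj_h=\delta\}$ is a finite union of smooth curves and hence $h$-null yields $|\Sigma_i^\delta\triangle\Sigma^\delta|\to0$. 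For Part 2, with $K$ as above I would write $\int_{\Sigma_i^\delta}\varphi_i\,d\mu_{f_i^*g_i}-\int_{\Sigma^\delta}\varphi\,d\mu_h$ as three differences: replacing $d\mu_{f_i^*g_i}$ by $d\mu_h$ (the density ratio tends to $1$ uniformly on $K$ while $\|\varphi_i\|_{L^1}$ stays bounded), replacing the domain $\Sigma_i^\delta$ by $\Sigma^\delta$ (bounded by $\int_{\Sigma_i^\delta\triangle\Sigma^\delta}|\varphi_i|\,d\mu_h\to0$, using $|\Sigma_i^\delta\triangle\Sigma^\delta|\to0$ and the uniform integrability of the $L^1(K,h)$-convergent sequence $\varphi_i$), and finally replacing $\varphi_i$ by $\varphi$ (direct $L^1(K,h)$ convergence); each term tends to $0$.

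The main obstacle is Part 1, and specifically the global nature of $\inj$: one must rule out that a short loop wrapping a collapsing collar corrupts the local comparison, and must check carefully that the $\delta$-thin parts of the degenerating collars match the cusp neighbourhoods in the limit so that the thick parts neither escape to the punctures nor develop phantom components. Once the thick-part convergence of Part 1 is in hand, Part 2 is essentially mechanical.
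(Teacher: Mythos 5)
Your proof has the same architecture as the paper's (confine the thick parts to a uniform compact set; get uniform convergence of the injectivity radius there; deduce the Hausdorff and measure statements from the smallness of the transition region $\{\abs{\inj_h-\delta}\leq\eps\}$; then Part 2 by splitting the integral), and your steps for the uniform convergence of $\inj$ on compacta, for the nullity of the level set $\{\inj_h=\delta\}$, and for Part 2 are sound --- the level-set argument is a legitimate substitute for the paper's explicit computation with subcylinders. The genuine gap is in the step you rely on to confine $\Sigma_i^\delta$ to a compact set $K$: the claim that a \emph{fixed} punctured-disc neighbourhood of each puncture lies in the $\delta$-thin part of $(\Sigma,f_i^*g_i)$ for all large $i$. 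Your justification --- the injectivity radius of the collar $\cc(\ell_i^j)$ grows monotonically from the core and $X_\delta(\ell_i^j)\to\infty$ --- lives entirely in the collar coordinates of $(M,g_i)$ and never involves the maps $f_i$. That the $\delta$-thin part of the collar is a longer and longer cylinder \emph{in $M$} says nothing about which subset of $\Sigma$ it corresponds to under $f_i$: the only control on $f_i$ provided by Proposition \ref{pro1.1} is the smooth \emph{local} convergence $f_i^*g_i\to h$ (and of the complex structures), which gives no uniformity near the punctures --- exactly where your claim has its content. (Pointwise, any fixed $z$ near a puncture is eventually thin, since its short noncontractible $h$-loop has $f_i^*g_i$-length converging to its $h$-length; but that yields thin-ness only on compact subsets of the punctured disc, not on a full punctured neighbourhood uniformly in $i$. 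And ``collars converge to cusps'' cannot simply be quoted as a standard fact here, since Proposition \ref{pro1.1} as stated does not contain it; making that picture precise is much of what this lemma is for.)

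The paper closes exactly this gap with a topological argument you are missing. First, by continuity of $\tau_i$ and $\tau_i(\sigma_i^j)=q^j$, all points of the cusp neighbourhood $U(p^j)$ with sufficiently large --- a priori $i$-dependent --- $s$-coordinate lie in the thin part of $(\Sigma,f_i^*g_i)$. Second, by the uniform convergence of $\inj$ on compact sets (your step, also the paper's), the fixed compact ``barrier'' annulus $\{p\in U(p^j):\inj_h(p)\in[\delta/4,\delta/2]\}$ eventually lies in that thin part as well. Third, since the relevant component of the thin part is topologically a cylinder containing both the deep region and the barrier, it must contain the entire portion of $U(p^j)$ beyond the barrier; this is the $i$-independent containment that confines the thick parts $\Sigma_i^\delta$ to a fixed compact set. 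Some such bridge between the collar picture in $(M,g_i)$ and fixed neighbourhoods of the punctures in $\Sigma$ --- via $\tau_i$ together with a connectedness/barrier argument --- is indispensable; without it your Part 1, and hence everything downstream, remains unproven.
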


\begin{proof} 
Let $0<\de<\arsinh(1)$ be fixed.
We first recall that the limiting surface can be decomposed into a compact set $K_0$ as well as the neighbourhoods $U(p^j)$, $j\in\{1\ldots 2k\}$ of the punctures which are isometric to the cylinders
$(\pi,\infty)\times S^1$ equipped with the metric
$s^{-2}(ds^2+d\theta^2)$. Since the $\de$-thick part of such a cusp is also compact, the whole $\de$-thick part $\Si^\de$ of the limiting surface $(\Si,h)$ is compact
and thus the metrics $f_i^*g_i$ converge
smoothly on $\Si^\de$. In particular, given any $\eps>0$ and $\de>0$ we find that
\beq\label{est:inj-rad} \sup_{z\in \Si^\de} \abs{\inj_{f_{i}^*g_i}(z)-\inj_{h}(z)}<\eps\eeq
for $i$ sufficiently large, say $i\geq i_0(\eps,\de)$, so we conclude that $\Si^\de\subset \Si_i^{\de-\eps}$.

By the same argument, for $K$ any fixed compact subset of $\Si$, we conclude that the points in $K\setminus \Si^{\de}$ are eventually in the $\de+\eps$ thin part of the
degenerating surfaces $(\Si,f_i^*g_i)$. In order to obtain the inclusion
\beq \label{est:inclusion}\Si_i^{\de+\ep}\subset \Si^\de\eeq
for $i$ sufficiently large, we thus need to prove that there is a \textit{uniform} compact subset $K(\de+\eps)$ of $\Si$ which contains
the $\tilde\de=\de+\eps$ thick parts of the degenerating surfaces $(\Si,f_i^*g_i)$
for all $i$ sufficiently large. 
Using the decomposition of $\Si$ into a compact set $K_0$ and the neighbourhoods of the punctures $U(p^j)$ it is enough to establish this claim for points contained in
$U(p^j)$, $j\in\{1\ldots 2k\}$ and we analyse the $\tilde\de\thin$ parts

$$\mathcal{U}_i^j(\tilde \de):=\tilde \de\text{-thin}(U(p^j),f_i^*g_i)=\{p\in U(p^j): \inj_{f_i^*g_i}(p)<\tilde \de\},\quad 0<\tilde\de<\arsinh(1)$$
of these cylindrical neighbourhoods of the punctures.

We recall that the maps $\tau_i:M\to \widetilde M$ described in Proposition 
\ref{pro1.1}
are continuous
and map $\si_i^j$ to the point $q^j$ in $\widetilde M$, which in turn corresponds to a pair of punctures in $\Si$.

Thus for $i$ sufficiently large, $\mathcal{U}_i^j(\tilde\de)$ must be (topologically) a cylinder which contains in particular all points
close to the puncture, i.e.~using the description of $U(p^j)$ in Proposition \ref{Thick-thin-2}, in particular all points with large (a priori depending on $i$) $s$-coordinate.

But on the other hand, by what we already proved the
compact cylinder
$\{p\in U(p^j): \inj_{h}(p)\in[\tilde \de/4,\tilde \de/2]\}$
is eventually contained in $\mathcal{U}_i^j(\tilde \de)$ so for $i$ sufficiently large
the whole set $U(p^j)\setminus \Si_{\tilde \de/2}$ is contained in
$\mathcal{U}_i^j(\tilde\de)$. Equivalently we obtain that $\tilde \de\thick(U(p^j))=U(p^j)\setminus \mathcal{U}_i^j(\tilde \de)$ is contained in $\Si^{\tilde\de/2}$, for $i$ large, 
and thus that the sets $\Si_i^{\tilde\de}$ are contained in a uniform compact subset of $(\Si,h)$.

All in all we thus conclude that for any $\de>0$, any $\eps>0$ and for $i$ large enough
\beq \label{est:Si^delta}\Si_i^{\de+\eps}\subset \Si^\de\subset\Si_i^{\de-\eps} .\eeq

The set $\Si^{\de-\eps}\setminus \Si^{\de+\eps}$, $0<\de+\eps<\arsinh(1)$, is now given by a union of subcylinders $[X_{\de-\eps},X_{\de+\eps}]\times S^1$ of collar respectively
puncture regions that are explicitly described by Lemma \ref{Collar-lemma} and Proposition \ref{Thick-thin-2} and it is easy to see that the 
distance between the two boundary curves of each such cylinder converges
to zero as $\eps\to0$. Combined with \eqref{est:Si^delta} this in particular implies the convergence of the sets $\Si_i^\de$ to $\Si^\de$ as described in the first statement of the 
lemma.

To obtain the second claim of Lemma \ref{lemma:thick} we now exploit that local convergence of functions and metrics implies uniform convergence
on the compact set $\Si^{\de/2}$, $\de>0$, which contains $\Si_i^\de$ for $i$ sufficiently large.
Combining the convergence of the integrals on the fixed set $\Si^\de$
$$\int_{\Si^\de}\varphi_i d\mu_{f_i^*g_i}\to \int_{\Si^\de}\varphi d\mu_h$$
with the fact that the symmetric difference $\Si^\de\Delta\Si_i^\de$ is contained in the compact set $\Si^{\de/2}$ for $i$ large and that its measure converges to zero
implies that
$$\limsup_{i\to\infty}\int_{\Si^\de\Delta\Si_i^\de}\abs{\varphi_i} d\mu_{f_i^*g_i}\leq \limsup_{i\to\infty}\int_{\Si^\de\Delta \Si_i^\de}\abs{\varphi} d\mu_{h}= 0,$$
so that we obtain the second claim of the lemma.
\end{proof}

\subsection{Estimates for holomorphic quadratic differentials on hyperbolic surfaces}
We finally collect a few useful properties of holomorphic quadratic differentials.
We first remark that the $L^p$ norms over the thick part of the surface are controlled by the $L^1$ norm
\begin{lemma}\label{lemma:Lp}
For any $\de>0$ and any closed surface $M$ there exists a constant $C<\infty$ depending only on $\de$ and the genus of $M$ such that for every hyperbolic metric $g$ on $M$
$$\norm{\Th}_{L^p(M^\de,g)}\leq C\cdot \norm{\Th}_{L^1(M,g)}, \quad\text{ for all } 1\leq p\leq \infty, \quad\Th\in\Hol(M,g), $$
where $M^\de:=\de\thick(M,g)$.
\end{lemma}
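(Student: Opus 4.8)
The plan is to reduce everything to the endpoint case $p=\infty$, and then to prove the pointwise bound $\|\Th\|_{L^\infty(M^\de,g)}\leq C(\de)\|\Th\|_{L^1(M,g)}$ by a mean-value argument carried out in a conformal chart that is \emph{the same} for every metric and every base point. For the reduction, I would use that by Gauss--Bonnet the total area $\mathrm{Area}(M,g)=4\pi(\gamma-1)$ is a fixed number depending only on the genus, so that for any $1\leq p<\infty$ and any $\Th\in\Hol(M,g)$,
$$\|\Th\|_{L^p(M^\de,g)}\leq \|\Th\|_{L^\infty(M^\de,g)}\cdot \mathrm{Area}(M,g)^{1/p}\leq 4\pi(\gamma-1)\,\|\Th\|_{L^\infty(M^\de,g)},$$
where the last inequality uses $4\pi(\gamma-1)\geq 1$ together with the monotonicity of $t\mapsto t^{1/p}$. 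Thus it suffices to establish the $L^\infty$ bound on $M^\de$, and this is the only place where the genus enters the final constant, and only through the area.

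For the $L^\infty$ bound I would fix a point $p\in M^\de$, so that $\inj_{(M,g)}(p)\geq\de$. Since $(M,g)$ has constant curvature $-1$ and the hyperbolic plane is homogeneous, the geodesic ball $B_g(p,\de)$ is embedded and isometric to the \emph{standard} hyperbolic ball of radius $\de$, independently of $g$ and of $p$. I would realise this standard ball in the Poincar\'e disc model as the Euclidean disc $D_{r_\de}$ with $r_\de=\tanh(\de/2)$, equipped with $g=\lambda^2|dz|^2$, $\lambda(z)=\tfrac{2}{1-|z|^2}$, and with $p\mapsto 0$. On this chart write $\Th=\phi(z)\,dz^2$ with $\phi$ holomorphic; the pointwise norm is $|\Th|_g=c_0\lambda^{-2}|\phi|$ for a universal constant $c_0$, and the crucial conformal naturality of the associated measure reads $|\Th|_g\,d\mu_g=c_0|\phi|\,\mathrm{d}x\,\mathrm{d}y$. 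Applying the sub-mean-value property of the holomorphic function $\phi$ over the sub-disc $D_{r_\de/2}\Subset D_{r_\de}$ then yields
$$|\phi(0)|\leq \frac{1}{\pi(r_\de/2)^2}\int_{D_{r_\de/2}}|\phi|\,\mathrm{d}x\,\mathrm{d}y=\frac{1}{c_0\pi(r_\de/2)^2}\int_{D_{r_\de/2}}|\Th|_g\,d\mu_g\leq \frac{1}{c_0\pi(r_\de/2)^2}\|\Th\|_{L^1(M,g)}.$$
Since $\lambda(0)=2$ is explicit, $|\Th|_g(p)=c_0\lambda(0)^{-2}|\phi(0)|$ is bounded by $C(\de)\|\Th\|_{L^1(M,g)}$ with $C(\de)$ depending only on $r_\de$, i.e.\ only on $\de$; taking the supremum over $p\in M^\de$ gives the $L^\infty$ estimate, and combining with the reduction above proves the lemma.

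The point I would flag as the crux is the \emph{uniformity} of the constant over all hyperbolic metrics $g$ and all base points $p$. This is not the usual elliptic-regularity estimate with an a priori metric-dependent constant: by homogeneity of the hyperbolic plane the local model $(B_g(p,\de),g)$ is literally the same standard hyperbolic disc for every $p\in M^\de$ and every $g$, so the conformal factor $\lambda$, and with it every constant entering the mean-value bound, depends on $\de$ alone. (Equivalently, the $L^\infty$ content here is the $m=0$ case of Lemma~\ref{lemma:holomorphic}; I have given the self-contained mean-value argument to keep this part of the appendix independent, and the area reduction is then immediate.)
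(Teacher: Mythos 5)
Your proposal is correct and takes essentially the same approach as the paper: reduce to the $L^\infty$ bound using the genus-determined area, then pass to the isometric chart onto the $\de$-ball of the Poincar\'e disc (uniform in $g$ and in the base point, exactly as in the paper's proof) and control the holomorphic coefficient by its $L^1$ norm. The only difference is cosmetic: where the paper invokes ``standard estimates from complex analysis,'' you spell these out via the sub-mean-value inequality and the conformal invariance of $|\Th|_g\,d\mu_g$.
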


Furthermore we clarify how holomorphicity leads to derivative estimates in terms of the $L^1$ norm, in the presence of degeneration.

\begin{lemma}\label{lemma:holomorphic}
Let $(M,g_i)$ be any sequence of hyperbolic surfaces that converges to a (possibly punctured) limiting surface $(\Sigma,h)$ as described in Proposition \ref{pro1.1}. Then holomorphic quadratic differentials are uniformly controlled on the thick part of the surfaces in the following sense:
For any $\de>0$ and any $m\in \N_0$, there exists a
uniform  ($i$-independent)
constant $C<\infty$ such that for all holomorphic quadratic differentials
$\Th_i\in \Hol (M,g_i)$
$$\norm{f_i^*\Th_i}_{C^m(\Si_i^\de)}\leq C\cdot \norm{\Th_i}_{L^1(M,g_i)}, \quad i\in\N.   $$
The same bound is valid also on the limiting surface
$$\norm{\Th_\infty}_{C^m(\Si^\de)}\leq C\cdot\norm{\Th_\infty}_{L^1(\Si,h)}$$
for all $\Th_\infty\in \Hol(\Si,h)$.
\end{lemma}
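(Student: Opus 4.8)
The plan is to reduce the statement to a standard interior estimate for holomorphic functions on a disc of definite size, exploiting two structural facts. First, the $\delta$-thick part of \emph{any} hyperbolic surface is locally isometric to a fixed piece of the hyperbolic plane, so the local geometry there is completely universal and carries no dependence on the surface. Second, the $L^1$ norm of a quadratic differential is conformally invariant, so in a conformal chart it reduces to the bare Euclidean $L^1$ norm of the representing holomorphic function, with no weight coming from the conformal factor.

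First I would fix $\de>0$ and $m\in\N_0$ and work on a single hyperbolic surface $(M,g)$. Given a point $p$ with $\inj(p)\geq\de$, the geodesic ball $B_\de(p)$ is embedded and isometric to a ball of radius $\de$ in $\mathbb H^2$, and I introduce there a conformal (\emph{isometric}) coordinate $w=s+i\theta$ in which $g=\rho^2(ds^2+d\theta^2)$ with $\rho$ the explicit Poincar\'e factor; together with all of its derivatives, $\rho$ is bounded above and below on $B_{\de/2}(p)$ by constants depending only on $\de$. Writing $\Th=\phi\,dw^2$ with $\phi$ holomorphic, the normalisation $|dw^2|=2\rho^{-2}$ gives $|\Th|_g\,d\mu_g=2|\phi|\,ds\,d\theta$, so that the $g$-$L^1$ norm of $\Th$ over $B_{\de/2}(p)$ equals, up to the factor $2$, the Euclidean integral of $|\phi|$ and is in particular at most $\norm{\Th}_{L^1(M,g)}$.

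Next I would apply interior estimates for the holomorphic function $\phi$. Since $|\phi|$ is subharmonic and $\phi$ holomorphic, the mean value property and the Cauchy integral formula bound $\phi$ and all of its partial derivatives up to order $m$ at $p$ by $C(\de,m)\int_{B_{\de/2}(p)}|\phi|\,ds\,d\theta\leq C(\de,m)\norm{\Th}_{L^1(M,g)}$. Translating this back into a pointwise bound on the $C^m$ norm of the tensor $\Th=\phi\,dw^2$ costs only the uniformly bounded derivatives of $\rho$ entering the covariant derivatives and the norm, so I obtain a pointwise $C^m$ estimate for $\Th$ at every point of the $\de$-thick part, with a constant depending only on $\de$ and $m$ and not on $(M,g)$. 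Applied to $(M,g_i)$ and transported by the isometry-invariance of the $C^m$ norm under pullback, this controls $f_i^*\Th_i$ on $\Si_i^\de$ measured with respect to $f_i^*g_i$. Since by Lemma \ref{lemma:thick} all the sets $\Si_i^\de$ lie in the single fixed compact set $\Si^{\de/2}$, on which $f_i^*g_i\to h$ smoothly, the $C^m$ norms with respect to $f_i^*g_i$ and with respect to $h$ are uniformly comparable for large $i$, while the finitely many remaining indices are absorbed into $C$. The claim on the limit $(\Si,h)$ is then identical, since its thick part carries exactly the same universal local hyperbolic geometry.

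The only point demanding genuine care is the uniformity of the constant in $i$. This is precisely where I rely on constant curvature together with the injectivity radius bound: the isometric charts on the thick part have a size and a conformal factor that are entirely independent of the surface, so the interior estimate for $\phi$ comes with an $i$-independent constant. The passage from the varying metrics $f_i^*g_i$ to the fixed reference $h$ is then harmless, because Lemma \ref{lemma:thick} confines all the thick parts to one compact subset of $(\Si,h)$ on which the metrics converge smoothly.
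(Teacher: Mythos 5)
Your proposal is correct and is essentially the paper's own argument: both reduce the claim to the interior $L^1$-to-$C^m$ estimate for holomorphic functions on a fixed-size ball in the Poincar\'e disc, accessed through isometric conformal charts that exist uniformly on the $\de$-thick part, with the conformal invariance of the $L^1$ norm giving the right-hand side. The only (immaterial) difference is organisational: the paper fixes a finite cover of $\Si^\de$ by balls and uses the convergence of the complex structures to produce charts $\phi_i^j\to\phi_\infty^j$ converging to fixed charts of $(\Si,h)$, whereas you first prove a surface-independent intrinsic estimate on each $(M,g_i)$ and then pass to the fixed reference charts via the smooth convergence $f_i^*g_i\to h$ on a compact set containing all the $\Si_i^\de$ (Lemma \ref{lemma:thick}), which amounts to the same use of Deligne--Mumford convergence.
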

Here and in the following we compute the $C^m$ norm with respect to a fixed set of coordinate charts of $\Si$.

\begin{rmk}\label{rmk:Linfty-limit}
Thanks to the rapid decay on collars of elements of $W_i$ as well as the convergence of these spaces to $\Hol(\Si,h)$ as discussed in Theorem \ref{neck_lemma},
under the assumptions of the previous lemma we could also obtain
an estimate of the form
\beq
\norm{\Th}_{L^p(\Si,h)}\leq C\cdot \norm{\Th}_{L^1(\Si,h)}, \text{ for all } 1\leq p\leq \infty, \quad\Th\in\Hol(\Si,h),
\eeq
giving a bound on the \textit{global} $L^p$ norm of elements of $\Hol(\Si,h)$. Here the constant $C<\infty$ depends on $(\Si,h)$.
\end{rmk}

We first give a very short proof of Lemma \ref{lemma:Lp}.
\begin{proof}(Lemma \ref{lemma:Lp}.)
Let $(M,g)$ be a hyperbolic surface, let $\Th\in \Hol(M,g)$ and let $\de>0$. We first remark that with the area of $(M,g)$ determined by its genus,
it is sufficient to bound the $L^\infty$ norm of $\Th$. Given any point $z_0\in M^\de$ we choose a coordinate chart
$$\phi:B_{g}(z_0,\de)\to(B_{g_H}(0,\de),g_H)$$
which is an isometry from the $\de$-ball around $z_0$ in $(M,g)$ to the ball of radius $\de$ in the Poincar\'e hyperbolic disc. In this coordinate chart $\Th$ is given as
$\th dz^2$ for a holomorphic function $\th:B_{g_H}(0,\de)\to \C$. Standard estimates from complex analysis imply that the $L^1$ norm of the function $\th$ bounds its $L^\infty$ norm on a slightly
smaller ball, so that
\beqas
\norm{\Th}_{L^\infty(B_{g}(z_0,\de/2),g)}&\leq \norm{\th}_{L^{\infty}(B_{g_H}(0,\de/2))}\cdot \norm{dz^2}_{L^{\infty}(B_{g_H}(0,\de/2),g_H)}\\
&\leq C_\de\norm{\th}_{L^1(B_{g_H}(0,\de))}
\leq C_\de\norm{\Th}_{L^1(B_{g}(z_0,\de),g)}\eeqas
for a constant $C_\de$ depending only on $\de$. 
\end{proof}

\begin{proof}(Lemma \ref{lemma:holomorphic}.)
For the given $\de>0$, choose a finite cover of $\Si^\de$ consisting of balls $B_{h}(z^j,\de/4)\subset (\Si,h)$ with centres $z^j\in\Si^\de$.
Then for $i$ large enough, say $i\geq i_0$, also the $\de$-thick set $\Si_i^\de$ of $(\Si,f_i^*g_i)$ is covered by these balls and we may furthermore assume that
$\inj_{f_i^*g_i}(z^j)\geq \frac\de 2$ for each $j$.

Since the complex structures converge, there is a sequence of atlases which consist of coordinate charts that can
be viewed as isometries
$$\phi_i^j:B_{f_i^*g_i}(z^j,\de/2)\to(B_{g_H}(0,\de/2),g_H)$$
from the balls $B_{f_i^*g_i}(z^j,\de/2)$ of radius $\de/2$ in $(\Si,f_i^*g_i)$ to the fixed ball $B_{g_H}(0,\de/2)$ of radius $\de/2$ in the Poincar\'e hyperbolic disc
and such that the maps $\phi_i^j$ converge smoothly locally to an isometry $\phi_\infty^j$
from $B_{h}(z^j,\de/2)\subset (\Si,h)$ to
$(B_{g_H}(0,\de/2),g_H)$.

Working on the fixed domain $B_{g_H}(0,\de/2)$, standard complex analysis gives uniform $C^m$ bounds of
$$\norm{\th_i^j}_{C^m(B_{g_H}(0,\de/4))}\leq C_\de\cdot \norm{\th_i^j}_{L^1(B_{g_H}(0,\de/2))}\leq
C_\de\norm{\Th_i}_{L^1(M,g_i)}$$
for the holomorphic functions $\th_i^j$ that represent $f_i^*\Th_i$ with respect to the coordinate charts $\phi_i^j$.

Combined with the convergence of the charts $\phi_i^j$, these estimates translate to
uniform $C^m$ bounds on $f_i^*\Th_i$
$$\norm{f_i^*\Th_i}_{C^m(\Si_i^\de)}\leq C\cdot \sup_{j}\norm{\th_i^j}_{C^m(B_{g_H}(0,\de/4))}\leq C_\de\norm{\Th_i}_{L^1(M,g_i)}$$
with respect to the (fixed) isometric coordinate charts $\phi_\infty^j$ of $(\Si,h)$.

The estimate for holomorphic quadratic differentials on the limiting surface are an immediate consequence of the
$C^m$ estimates for holomorphic functions since we can work directly with respect to the complex coordinate charts $\phi_\infty^j$ of $(\Si,h)$ from above.
\end{proof}

We finally remark that the space $\Hol(\Si,h)$ of holomorphic quadratic differentials with finite $L^1$ norm can be equivalently characterised
as follows.

\begin{lemma} \label{lemma:poles-norms}
Let $(\Sigma, h, c)$ be a  hyperbolic punctured surface. Then for any holomorphic quadratic differential $\Phi$ on $(\Si,h)$ the following statements are equivalent:
\begin{itemize}
\item[(i)]   $\Phi\in \Hol(\Si,h)$, that is $\norm{\Phi}_{L^1(\Si,h)}$ is finite.
\item[(ii)] $\Phi$ is bounded (with respect to the hyperbolic metric $h$).
\item[(iii)] At each of the punctures of $(\Si,c)$ the differential $\Phi$ has at worst a simple pole.
\end{itemize}
\end{lemma}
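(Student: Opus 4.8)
The plan is to reduce everything to a purely local analysis at each puncture. On the complement of small cusp neighbourhoods the surface $\Si$ is compact, and there $\Phi$ is a fixed holomorphic (hence smooth) object, so it is automatically bounded and in $L^1$; thus each of (i), (ii), (iii) is equivalent to its analogue near the punctures. By Proposition \ref{Thick-thin-2} a punctured neighbourhood of each puncture is isometric to a standard cusp, which I model on the punctured disc $\{0<|z|<\rho_0\}$ with holomorphic coordinate $z$ (compatible with $c$, so the pole order of a quadratic differential computed in $z$ is coordinate-independent) and metric $h=\lambda^2|dz|^2$, $\lambda^2=\frac{1}{|z|^2(\log|z|)^2}$. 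Writing $\Phi=\phi(z)\,dz^2$ with $\phi$ holomorphic on the punctured disc, the single computation that drives the whole proof is that, with the convention $|dz^2|_h=2\lambda^{-2}$ already used in the paper, one has $|\Phi|_h=2|z|^2(\log|z|)^2|\phi(z)|$ while $d\mu_h=\lambda^2\,dx\,dy$, so that
\[
|\Phi|_h\,d\mu_h=2|\phi(z)|\,dx\,dy ,
\]
i.e. the hyperbolic $L^1$ mass of $\Phi$ over the cusp equals twice the \emph{Euclidean} $L^1$ mass of $\phi$ over the punctured disc.

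I would then establish the equivalences as a cycle (iii)$\Rightarrow$(ii)$\Rightarrow$(i)$\Rightarrow$(iii). For (iii)$\Rightarrow$(ii): if $\phi$ has at worst a simple pole, then $\phi(z)=a_{-1}z^{-1}+O(1)$ near $0$, so there is a constant $C$ with $|\Phi|_h=2|z|^2(\log|z|)^2|\phi|\leq C|z|(\log|z|)^2\to 0$ as $z\to0$, giving boundedness near the puncture and hence on all of $\Si$. For (ii)$\Rightarrow$(i): the total area of $(\Si,h)$ is finite, since the compact part and the collars have finite area (Lemma \ref{lem2.3}) and each cusp has finite area $\int_\pi^\infty 2\pi s^{-2}\,ds<\infty$; a bounded quadratic differential on a finite-area surface is automatically in $L^1$.

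The key step, which I expect to be the only substantive point, is (i)$\Rightarrow$(iii): finiteness of the $L^1$ norm must force every Laurent coefficient $a_n$ of $\phi(z)=\sum_n a_n z^n$ with $n\leq-2$ to vanish, ruling out higher-order poles and essential singularities simultaneously. For this I would extract Fourier modes on circles: for each fixed $n$, using uniform convergence of the Laurent series on the compact subannuli to integrate term by term and $|e^{-in\theta}|=1$,
\[
\int_{r<|z|<\rho_0}|\phi|\,dx\,dy=\int_r^{\rho_0}\!\Big(\int_0^{2\pi}|\phi(\rho e^{i\theta})|\,d\theta\Big)\rho\,d\rho\;\geq\;2\pi|a_n|\int_r^{\rho_0}\rho^{n+1}\,d\rho .
\]
When $n\leq-2$ the exponent satisfies $n+1\leq-1$, so $\int_r^{\rho_0}\rho^{n+1}\,d\rho\to\infty$ as $r\to0$; hence $a_n\neq0$ for some $n\leq-2$ forces the Euclidean $L^1$ mass, and therefore (by the displayed identity) the hyperbolic $L^1$ mass, to be infinite, contradicting (i). Thus (i) forces $\phi(z)=a_{-1}z^{-1}+(\text{holomorphic})$, which is exactly (iii); conversely such $\phi$ is Euclidean-$L^1$ near $0$ because $\int_{|z|<\rho_0}|z|^{-1}\,dx\,dy=2\pi\int_0^{\rho_0}\,dr<\infty$ and the holomorphic remainder is bounded, closing the cycle. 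The main obstacle is exactly this mode-extraction estimate; everything else is the bookkeeping of translating between the hyperbolic and Euclidean pictures through the identity above.
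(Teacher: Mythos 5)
Your proof is correct, and its overall architecture matches the paper's: reduce to neighbourhoods of the punctures (the paper invokes Lemma \ref{lemma:holomorphic} for the thick part, you use compactness of the complement of the cusps, which amounts to the same thing), exploit the conformal cancellation $\abs{\Phi}_h\,d\mu_h=2\abs{\phi}\,dx\,dy$ so that the hyperbolic $L^1$ norm over a cusp is twice the Euclidean $L^1$ norm of the representing function $\phi$, and then reduce (i)$\Leftrightarrow$(iii) to the statement that a holomorphic function on a punctured disc with finite Euclidean $L^1$ norm has at worst a simple pole; the implications (iii)$\Rightarrow$(ii)$\Rightarrow$(i) are handled identically (explicit formula $\abs{\Phi}_h=2\abs{z}^2(\log\abs{z})^2\abs{\phi}$, plus finite area). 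The one genuine difference is how that central local fact is proved. The paper disposes of it in a single sentence by subharmonicity of $\abs{\phi}$: the mean value inequality on discs of radius comparable to $\abs{z}$ centred at $z$ gives $\abs{\phi(z)}\leq C\abs{z}^{-2}\int_{D_{\abs{z}/2}(z)}\abs{\phi}\,dx\,dy=o(\abs{z}^{-2})$ by absolute continuity of the integral, which kills all Laurent coefficients $a_n$, $n\leq -2$. You instead extract each Fourier mode on circles, $\int_0^{2\pi}\abs{\phi(\rho e^{i\theta})}\,d\theta\geq 2\pi\abs{a_n}\rho^n$, and use Tonelli plus the divergence of $\int_0\rho^{n+1}\,d\rho$ for $n\leq-2$. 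Both arguments are elementary and standard; yours has the advantage of being fully written out (the paper's is only sketched) and of ruling out poles of order $\geq 2$ and essential singularities in one uniform stroke, while the paper's subharmonicity argument is shorter and yields the quantitative pointwise decay $\abs{\phi(z)}=o(\abs{z}^{-2})$, a statement of independent use and one that extends beyond holomorphic functions to general subharmonic densities.
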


The last statement implies in particular that an element of $\Hol(\Si,h)$ cannot have an essential singularity at a puncture, so we could equivalently
say that elements of $\Hol(\Si,h)$ are
meromorphic with poles of order no more than $1$.
\begin{proof}
Let $\Phi$ be any holomorphic quadratic differential on $(\Si,h)$. We remark that according to Lemma \ref{lemma:holomorphic} it is enough to
consider $\Phi$ on neighbourhoods $U(p^j)$ of the
punctures as described in Proposition \ref{Thick-thin-2}, that is on punctured discs $\hat D$ equipped with the hyperbolic metric
$(\abs{z}\cdot \log \abs{z})^{-2} \abs{dz}^2$.

We remark that a holomorphic function on a punctured disc $\hat D$
with finite $L^1$ norm can neither have a pole of order more than one, nor an essential singularity.
One way of seeing that is to appeal to
the subharmonicity of the absolute value of the holomorphic function, applying it on discs of radius $\abs{z}$ around $z$.

Thus $\norm{\Phi}_{L^1(U(p^j),h)}=2\int_{\hat D}\abs{\phi} dxdy$ 
is finite on each of the puncture regions $U(p^j)$
if and only if (iii) holds and we conclude that (i) and (iii) are equivalent.

Finally, we recall that on such a neighbourhood $U(p^j)$,
$$\abs{\Phi}(z)=\abs{\phi}\abs{dz^2}
=2\abs{\phi}\abs{z}^2(\log \abs{z})^2,$$
with our normalisation,
and thus that a holomorphic quadratic differential with a simple pole is bounded (with respect to $h$) so that (iii) implies (ii) which trivially implies (i).
\end{proof}

With $\Hol(\Si,h)$ characterised as the space of meromorphic quadratic differentials (with poles of order no more than $1$) its dimension is now described by the
Riemann-Roch Theorem

\begin{lemma} \cite{Ga} \label{lem2.6}
Let $(\Si,h)$ be a complete (not necessarily connected) hyperbolic surface with $K\in \N_0$ punctures.
Then the dimension of the space of integrable holomorphic quadratic differentials is 
\beq \label{dim-formula} {\rm dim}_{\mathbb{C}}\Hol(\Si, h) = \sum_i 3(\gamma_i-1) + K,  \nn
\eeq
where $\gamma_i$ is the genus of the $i$-th connected component of the compactification $\overline\Si$ of $\Si$ obtained by filling in the punctures.
\end{lemma}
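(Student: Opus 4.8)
The plan is to reduce the statement to the classical Riemann--Roch theorem on the compact surface $\overline\Si$, after first reinterpreting $\Hol(\Si,h)$ as a space of sections of an explicit line bundle. Since both sides of the claimed formula are additive over connected components -- the space of integrable holomorphic quadratic differentials splits as a direct sum over components -- it suffices to treat a single connected component, so I would fix one and take $\overline\Si$ to be a compact Riemann surface of genus $\gamma$ carrying $K$ marked points $p^1,\ldots,p^K$ (the filled-in punctures). The essential input that enables this reduction is Lemma \ref{lemma:poles-norms}: an element of $\Hol(\Si,h)$ is precisely a meromorphic quadratic differential on $\overline\Si$ that is holomorphic away from the $p^j$ and has at worst a simple pole at each $p^j$.

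Next I would phrase this algebraically. Writing $\ck$ for the canonical line bundle of $\overline\Si$ and $D:=\sum_j p^j$ for the divisor of punctures, the meromorphic quadratic differentials just described are exactly the holomorphic sections of $\ck^{\otimes 2}\otimes\mathcal{O}(D)$, the twist by $D$ being what permits a simple pole at each $p^j$. Hence $\dim_\C\Hol(\Si,h)=h^0\big(\ck^{\otimes 2}(D)\big)$ on this component, and its degree is $\deg\big(\ck^{\otimes 2}(D)\big)=2(2\gamma-2)+K=4\gamma-4+K$, using $\deg\ck=2\gamma-2$.

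Then I apply Riemann--Roch, $h^0(L)-h^1(L)=\deg L-\gamma+1$, to $L=\ck^{\otimes 2}(D)$, which gives $h^0(L)=3\gamma-3+K+h^1(L)$. The only genuinely nontrivial step is to show the correction term vanishes. By Serre duality $h^1(L)=h^0\big(\ck\otimes L^{-1}\big)=h^0\big(\ck^{-1}(-D)\big)$, and this bundle has degree $-(2\gamma-2)-K=2-2\gamma-K$. Here the hyperbolicity enters decisively: each component of a hyperbolic punctured surface is of general type, i.e. $2\gamma+K>2$, so the degree is strictly negative; as a line bundle of negative degree has no nonzero holomorphic sections, $h^1(L)=0$. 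Thus $h^0(L)=3(\gamma-1)+K$ on this component, and summing over all components yields $\sum_i 3(\gamma_i-1)+K$ as claimed (the closed-component case $K_i=0$ being covered automatically, since then general type forces $\gamma_i\geq 2$).

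I expect the only real obstacle to be the bookkeeping of the reduction rather than any hard analysis: one must carefully match the analytic object (an $L^1$ holomorphic quadratic differential on $(\Si,h)$, characterised through Lemma \ref{lemma:poles-norms}) with the algebraic sheaf $\ck^{\otimes 2}(D)$ on $\overline\Si$, and one must invoke the general-type inequality $2\gamma_i+K_i>2$ component by component so that the Serre-dual space is genuinely trivial on each. Once these two points are in place, the remainder is a direct degree computation.
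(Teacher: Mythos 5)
Your proof is correct and follows essentially the route the paper itself takes: the paper reduces the statement to the simple-pole characterisation of Lemma \ref{lemma:poles-norms} and then appeals to Riemann--Roch (citing \cite{Ga}), which is exactly your reduction to $h^0\big(\ck^{\otimes 2}(D)\big)$ on $\overline\Si$. The only difference is that you spell out the details the paper leaves to the reference -- the degree count, Serre duality, and the vanishing of $h^1$ via the general-type inequality $2\gamma_i+K_i>2$ -- and these details are all accurate.
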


\begin{rmk}\label{rmk:dim}
For a degenerating sequence of hyperbolic surfaces the (complex) dimension of the spaces $\Hol$ thus reduces in the limit $i\to \infty$ by exactly the number of collapsing collars.
Indeed, collapsing a closed, non-homotopically trivial curve $\sigma$ on a connected surface $\Sigma$ to a point and removing this point increases the number of punctures by two.
If $\sigma$ is a separating curve, this furthermore splits the surface $\Si$ into two parts of genus $\gamma_1+\gamma_2=\gamma$.
If $\sigma$ is not separating, then the resulting surface will have genus $\tilde \gamma=\gamma-1$.
In both cases the dimension of the space of integrable holomorphic quadratic differentials decreases by exactly one and the claim follows repeating the argument for all collapsing geodesics and
connected components.
\end{rmk}

M.~Rupflin:\\
{\sc Max-Planck-Institut f\"ur Gravitationsphysik, Am M\"uhlenberg 1, 14476 Potsdam, Germany}

P.M.~Topping and M.~Zhu:\\
{\sc Mathematics Institute, University of Warwick, Coventry,
CV4 7AL, UK}


\begin{thebibliography}{99}

\bibitem{DM} P. Deligne and D. Mumford: \emph{The irreducibility of the space of curves of given genus.}
 Inst. Hautes \'{E}tudes Sci. Publ. Math. \textbf{36} (1969), 75--109.

\bibitem{DLL} W. Ding, J. Li and Q. Liu, \emph{Evolution of minimal torus in Riemannian manifolds.\/} Invent. Math. \textbf{165} (2006) 225--242.

\bibitem{ES} J. Eells and J. H. Sampson, \emph{Harmonic mappings of Riemannian manifolds.\/} Amer. J. Math. \textbf{86} (1964) 109--160.


\bibitem{Ga} F. P. Gardiner:  \emph{Teichm\"{u}ller theory and quadratic differentials.} Pure and Applied Mathematics (New York). A Wiley-Interscience Publication. John Wiley \& Sons, Inc., New York, 1987. xviii+236 pp.


\bibitem{GOR} R. D. Gulliver, R. Osserman and H. L. Roydon,
\emph{A theory of branched immersions of surfaces.\/}
Amer. J. Math. \textbf{95} (1973) 750--812.

\bibitem{Hu} C. Hummel: \emph{Gromov's compactness theorem for
pseudo-holomorphic curves.} Progress in Mathematics, \textbf{151},
Birkh\"{a}user Verlag, Basel, (1997), viii+131 pp.

\bibitem{mumford} D. Mumford, \emph{A Remark on Mahler's Compactness Theorem.\/} Proc. Amer. Math. Soc. {\bf 28} (1971) 289--294.

\bibitem{R} B. Randol \emph{Cylinders in Riemann surfaces.\/}
Comment. Math. Helvetici \textbf{54} (1979) 1--5.

\bibitem{R-T} M. Rupflin and P.M. Topping, \textit{Flowing maps to minimal surfaces}, preprint (2012),
\url{http://arxiv.org/abs/1205.6298}

\bibitem{rupflin_existence} M. Rupflin, \textit{Flowing maps to minimal surfaces: Existence and uniqueness of solutions}, preprint
(2012) \url{http://arxiv.org/abs/1205.6982}

\bibitem{RT2} M. Rupflin and P.M. Topping, 
\textit{A uniform Poincar\'e estimate for quadratic differentials on closed surfaces.} 
In preparation.

\bibitem{SU} J. Sacks and K. Uhlenbeck, \emph{The existence of minimal
immersions of 2-spheres.\/} Ann. Math. {\bf 113} (1981) 1--24.






\bibitem{tromba}  A. Tromba, \emph{Teichm\"uller theory in Riemannian geometry.} Lecture notes prepared by Jochen Denzler. Lectures in Mathematics ETH-Z\"urich. Birkh\"auser (1992).



\bibitem{Zh} M. Zhu: \emph{Harmonic maps from degenerating Riemann surfaces.} Math. Z. 264 (2010), no. 1, 63-85.

\end{thebibliography}
\end{document}